\definecolor{darkblue}{rgb}{0,0,.5}
\theoremstyle{plain}
\newtheorem{theorem}{Theorem}[section]
\newtheorem{lemma}[theorem]{Lemma}
\newtheorem{proposition}[theorem]{Proposition}
\theoremstyle{definition}
\def\d{\textup{div}}
\def\D{\mathcal{D}}
\def\R{\mathbb{R}}
\def\A{\mathcal{A}}
\def\Ham{\mathcal{H}}
\newcommand{\up}{\upshape}
\newcommand{\leftmapsto}{\mbox{$\;\leftarrow\!\mapstochar\;$}}
\newcommand{\longto}{\longrightarrow}
\newcommand{\hookto}{\hookrightarrow}
\newcommand{\toto}{\twoheadrightarrow}
\def\vv<#1>{\langle#1\rangle}
\newcommand{\diag}[1]{\mbox{$\textup{diag}(#1)$}}
\newcommand{\pr}{\mbox{$\text{\up{pr}}$}}
\providecommand{\det}{\mbox{$\text{\up{det}}\,$}}
\providecommand{\vol}{\mbox{$\text{\up{vol}}$}}
\newcommand{\dd}[2]{\mbox{$\frac{\partial #2}{\partial #1}$}}
\newcommand{\Om}{\Omega}
\newcommand{\var}{\varphi}
\newcommand{\lam}{\lambda}
\newcommand{\Lam}{\Lambda}
\newcommand{\wt}[1]{\mbox{$\widetilde{#1}$}}
\newcommand{\by}[2]{\mbox{$\frac{#1}{#2}$}}
\providecommand{\set}[1]{\mbox{$\{#1\}$}}
\newcommand{\X}{\mathfrak{X}}
\newcommand{\curv}{\mbox{$\textup{Curv}$}}
\newcommand{\ver}{\mbox{$\textup{Ver}$}}
\newcommand{\hor}{\mbox{$\textup{Hor}$}}
\newcommand{\momap}{momentum map\xspace}
\newcommand{\ao}{\mathfrak{a}}
\newcommand{\gu}{\mathfrak{g}}
\newcommand{\ho}{\mathfrak{h}}
\newcommand{\ko}{\mathfrak{k}}
\newcommand{\mo}{\mathfrak{m}}
\newcommand{\po}{\mathfrak{p}}
\newcommand{\Ad}{\mbox{$\text{\upshape{Ad}}$}}
\newcommand{\ad}{\mbox{$\text{\upshape{ad}}$}}
\newcommand{\orb}{\mbox{$\mathcal{O}$}}
\newcommand{\SL}{\mbox{$\textup{SL}$}}
\newcommand{\gl}{\mbox{$\mathfrak{gl}$}}
\newcommand{\SO}{\mbox{$\textup{SO}$}}
\newcommand{\so}{\mbox{$\mathfrak{so}$}}
\newcommand{\Hamc}{\mbox{$\mathcal{H}_{\textup{c}}$}}
\newcommand{\Omnh}{\mbox{$\Om_{\textup{nh}}$}}
\newcommand{\Xnh}{\mbox{$X_{\textup{nh}}$}}
\newcommand{\W}{\mbox{$\mathcal{W}$}}
\title[Chaplygin systems associated to semi-simple Lie
  groups]{Chaplygin systems associated to Cartan decompositions of semi-simple Lie groups}
\author{Simon Hochgerner}
\address{Section de Mathematiques,
Station 8,
EPFL, CH-1015 Lausanne}
\email{simon.hochgerner@epfl.ch} 
\keywords{}
\date{July 1, 2009}
\dedicatory{Dedicated to Peter Michor on the Occasion of his 60th Birthday}
\begin{document}

\begin{abstract}
We relate a Chaplygin type system to a Cartan decomposition of a real
semi-simple Lie group. The resulting system is described in terms
of the structure theory associated to the Cartan decomposition. It
is shown to possess a preserved measure and when internal symmetries
are present these are factored out via a process called
truncation. Furthermore, a criterion for Hamiltonizability of the
system on the so-called ultimate reduced level is given. As important
special cases we find the Chaplygin ball rolling on a table and the
rubber ball rolling over another ball. 
\end{abstract}

\maketitle
\tableofcontents

\section{Introduction} 

We generalize the $n$-dimensional Chaplygin ball problem 
\cite{Chap87,FK95,EKMR04,D04,Jov09,HG09} to
non-holonomic systems associated to 
semi-simple Lie groups, 
and show how the Chaplygin ball system arises as a special case.
That is, we consider a real semi-simple Lie group $G$ and a Cartan
decomposition $G\cong K\times\po$ in the common notation of \cite{K02}.
On the Lie algebra level we have 
$\gu=\ko\oplus\po$ together with the usual bracket relations. In
$\po$ we fix a maximal abelian subspace $\ao$ and an element
$w_0\in\ao$. In Section~\ref{sec:chap-based-on-lie} we define a
non-holonomic system that is naturally associated to these data: 
the configuration space is 
\[
 Q := K\times V
\]
where $V$ is orthogonal to 
$Z_{\mathfrak{p}}(w_0) = \set{x\in\po: [w_0,x]=0}$ 
within $\po$, the constraint distribution is 
\[
 \D 
 := 
 \set{(s,u,x,[w_0,\Ad(s)u])\in K\times\ko\times V\times V}
 \subset TQ,
\]
and the Lagrangian is the obvious left invariant kinetic energy function
on $TQ$.
Then we use
the restricted roots of the pair $(\gu,\ao)$ to give a detailed
description
of the this model. We will see that the $n$-D Chaplygin ball
corresponds to taking $G=\SO(n,1)$.

We extend some of the results of \cite{FK95,Jov09,HG09}
to this setting. In particular this yields a geometrization of these
results since we follow the philosophy of \cite{EKMR04} in working with
a global trivialization of the compressed phase space
and using (almost) symplectic techniques.

More precisely, by making use of the restricted root space
decomposition associated to $(\gu,\ao)$ we directly show the existence of a preserved measure
for these types of systems at the compressed level -- Proposition~\ref{thm:p-meas}.

Then we pass to the
ultimate reduced phase space by means of truncation and reduction of
internal symmetries. This involves changing the non-holonomic two-form
in a certain way that is better adapted to the symmetries -- Section~\ref{sec:trunc}.
The passage from the original non-holonomic system to this reduced
phase space via compression
followed by reduction of internal symmetries is reminiscent of the
Hamiltonian reduction in stages theory which also lends the
terminology `ultimate reduced space'.

Moreover, 
in Theorem~\ref{thm:ham-at-0} we derive a necessary and sufficient condition for
Hamiltonization of the ultimate reduced system when the angular momentum with
respect to the internal symmetries is fixed to $0$.
This condition is of algebraic nature
and in some simple cases it allows to decide (non-) Hamiltonizability
by looking at the root system of $(\gu,\ao)$.
This result is a statement which only holds at the ultimate reduced
level and thus depends crucially on the reduction by truncation
described in 
Section~\ref{sec:trunc}. 


Section~\ref{sec:examples} contains some examples. We return to the
$n$-dimensional Chaplygin ball system corresponding to $G=\SO(n,1)$ 
and apply Theorem~\ref{thm:ham-at-0} to verify the recent result of
Jovanovic~\cite{Jov09} on Hamiltonizability of this system at the
ultimate reduced level when the angular momentum is fixed to $0$ and 
the inertia tensor is of special
form. Then we give two examples related to $\SL(n,\R)$ and
$\textup{Sp}(n,\R)$. 

Finally, we show how the rubber rolling
sphere-on-sphere system arises in this setting. This is not so
straightforward as for the ball on a table: We start with the split
real form of the complex semi-simple Lie group $G_2$ and consider,
according to the recipe of Section~\ref{sec:chap-based-on-lie}, its
Cartan decomposition. The resulting system is shown to be never
Hamiltonizable, not even for homogeneous inertia tensor $\mathbb{I}=1$. 
However, from Koiller and Ehlers~\cite{EK07} we know
that the rubber rolling system is Hamiltonizable. Thus we are
motivated to find a subsystem which is an obvious candidate for allowing
Hamiltonizability. This subsystem is then recognized as the rubber
ball arrangement for the case in which the ratio of the radii of the
balls is $1:3$. However, we are not claiming that we provide any new
insights into the dynamics of this system; we only find a new way to
see this as being part of a non-holonomic system that is
naturally defined on some bigger phase space.

In Section~\ref{sec:roh}
we recall the notion of Hamiltonization of a non-holonomic system.
Then we
reformulate the Chaplygin multiplier theorem in terms of a
characterization of conformally closed almost symplectic forms which
is due to Libermann~\cite{L55,LM87}. This characterization extends to
higher dimensions whence we also formulate a higher dimensional
analogon of the multiplier theorem. In Section~\ref{sec:ham} this is
used as a preparation for Theorem~\ref{thm:ham-at-0}.

\textbf{Acknowledgements.}
I would like to thank the organizers of the Mikulov meeting, Special
edition in honor of Peter Michor's 60th birthday, for two pleasant
days and the opportunity to present parts of the present paper. The
idea of relating Chaplygin systems to semi-simple Lie groups is, of
course,  taken
from the paper \cite{AKLM03} of Peter et al.\ where a similar
programme is carried out for spin Calogero-Moser systems. 
I am
also grateful to Tudor Ratiu for helpful discussions and to Katja
Sagerschnig for important remarks concerning
Section~\ref{sec:rubber}.

\section{Remarks on Hamiltonization}\label{sec:roh}

Non-holonomic systems can be seen as a generalization of Hamiltonian
mechanics. A natural question that arises is: when is a non-holonomic
system Hamiltonian or \emph{Hamiltonizable}?

As a toy example to illustrate some key ideas and also to set up notation 
we consider the vertical rolling disk. For more information on this,
and also on more complicated examples, see Bloch~\cite{B03}.
The configuration space is 
\[ 
 Q = S^1\times S^1\times\R^2
\]
with coordinates $q = (\theta,\var,x,y)$. Here $(x,y)$ denotes the contact
point of the disk on the table, $\theta$ its internal orientation, and
$\phi$ its orientation with respect to a fixed axis on the table. The
Lagrangian is the kinetic energy
\[
 L
 =
 \by{1}{2}\mathbb{I}\dot{\theta}^2 
 + \by{1}{2}\mathbb{J}\dot{\var}^2
 + \by{1}{2}m(\dot{x}^2+\dot{y}^2) 
\]
where $m$ is the mass of the disk and $\mathbb{I}$ and $\mathbb{J}$
are the different moments of inertia of the disk. The motion is to
satisfy a no slip constraint which means that 
\[
 \dot{x} = R\dot{\theta}\cos\var
 \textup{ and }
 \dot{y} = R\dot{\theta}\sin\var
\]
where $R$ is the radius of the disk.
To rewrite these constraints in a more geometric manner consider the
$\R^2$-valued $1$-form $\A\in\Om^1(S,\R^2)$ on $S := S^1\times S^1$ given by
\[
 \A_{(\theta,\var)} 
 =  
 \left(
 \begin{matrix}
  -R\cos\var\, d\theta\\
  -R\sin\var\, d\theta
 \end{matrix}
 \right).
\]
Let $\pi: Q = S\times\R^2\to S$ denote the Cartesian projection. 
The constraint space is thus defined by the smooth distribution 
\[ 
 \D =
 \set{(q,\dot{\theta},\dot{\var},-\A_{\pi(q)}(\dot{\theta},\dot{\var}))}
 \subset TQ
\]
Now it is important to notice that $L$ and $\D$ are invariant under
the free and proper action of the abelian Lie group $\R^2$ on $TQ$.  
This action defines a (trivial) principal fiber bundle $\R^2\hookto Q\toto
S$. Moreover, $\D$ is complementary to the vertical space $\ker T\pi$
of this bundle. In other words $\D$ defines a principal connection
with connection form $\A$ and the non-holonomic system $(Q,L,\D)$ is a
\emph{$G$-Chaplygin system} with $G=\R^2$. 
This system is truly non-holonomic since $\D$ is non-integrable since
the curvature $\curv_0^{\mathcal{A}} = d\A$ is non-zero.

$G$-Chaplygin systems are very well
behaved in the sense that they allow for a natural reduction of symmetries. For
this our main reference is \cite{EKMR04} where this reduction is termed
\emph{compression}. See also \cite{BS93} for a more general reduction
and \cite{HG09} for an account of these facts in the present
notation. 
The compressed system turns out to be an almost Hamiltonian system on
$T^*S$ with compressed Hamiltonian $\Hamc$. Of course, $\Hamc$ is
obtained by taking the Legendre transform of $L$, restricting to the
appropriate constraint subspace and factoring out the symmetries. The
dynamics $\Xnh = (\Omnh)^{-1}d\Hamc$ 
of the compressed system are encoded in the almost symplectic
form 
\[
 \Omnh
 :=
 \Om^S -
 \vv<J\circ\textup{horLift}^{\mathcal{A}},\curv_0^{\mathcal{A}}>
 = 
 \Om^s + \vv<\A,d\A>
\]
where $\Om^S$ is the canonical symplectic form on $T^*S=TS$
(identified via induced Legendre transform),
$\textup{horLift}^{\mathcal{A}}: TS\to TQ$ 
is the horizontal lift,  
$J: TQ=T^*Q\to\R^{2*}=\R^2$ (Legendre transform) is the standard
\momap
associated to the $\R^2$-action,
and
$\curv_0^{\mathcal{A}}$ is the induced curvature form on $S$
pulled-back to $TS$. Note that $\vv<\A,d\A>$ is a semi-basic
two-form on $TS$ which depends linearly on the fibers; 
the $\A$ in the left hand side of the pairing is
viewed as a function on $TS$. 
In general, the term
$\vv<J\circ\textup{horLift}^{\mathcal{A}},\curv_0^{\mathcal{A}}>$ is
non-closed thus preventing the system form being Hamiltonian. However,
in this special example we have
\[ 
 \vv<\A,d\A>_{(\theta,\var,\dot{\theta},\dot{\var})}
 =
 R^2\vv<
  \left(
  \begin{matrix}
    \dot{\theta}\cos\var\\
    \dot{\theta}\sin\var
  \end{matrix}
  \right)
 ,
   \left(
  \begin{matrix}
    -\sin\var \,d\var\wedge d\theta\\
    \cos\var \,d\var\wedge d\theta
  \end{matrix}
  \right)
 >
 =
 0.
\]
Thus the compressed system $(TS,\Om^S,\Hamc)$ is Hamiltonian even
though we started from a truly non-holonomic system $(Q,L,\D)$. Of
course, this fact is neither new nor surprising: the constraint forces
for this system are trivial.

More generally it may turn out that $\Omnh$ is \emph{conformally symplectic}
with respect to a positive function $F: S\to\R$, that is,
$d(F\Omnh)=0$. If this is the case we consider the rescaled
vectorfield $F^{-1}\Xnh$ which is now Hamiltonian with respect to
$F\Omnh$, and we say that the system $(T^*S,\Omnh,\Hamc)$ is
\emph{Hamiltonizable} 
or that $(Q,L,\D)$ is 
\emph{Hamiltonizable at the compressed level}. The idea is that one
reparametrizes the time $t=F^{-1}\tau$ in an $F$-dependent manner so that the system
is Hamiltonian in the new time $\tau$.

\subsection{Chaplygin's multiplier theorem via Libermann's criterion}
Let $(M,\sigma)$ be an almost symplectic manifold of dimension $2m$,
that is,
$\sigma$ is non-degenerate. Then we will make use of the codifferential
operator 
\[
 \delta: \Om^k(M)\longto\Om^{2m-k}(M)
\]
which is built out of $\sigma$ in the same way that the Hodge
codifferential is built out of a metric.
This operator is explained in the first chapter of the book of
Libermann and Marle \cite{LM87} and we use the same conventions.

\begin{theorem}[Chaplygin]
Let $B$ be a 2-dimensional Riemannian manifold.
Consider the natural kinetic energy Hamiltonian $\Ham: T^*B\to\R$
associated to the metric. Let $(T^*B,\sigma,\Ham)$ be an almost
Hamiltonian system such that:
\begin{enumerate}[\up (1)]
\item
$\sigma = \Om+\Lambda$ 
where $\Lambda$ is semi-basic with respect to $T^*B\to B$ and linear
in the fiber. That is, locally, $\Lambda = l(q,p)dq^1\wedge dq^2$ with
$l$ linear in $p$. Further, $\Om = \Om^B+\Xi$ with $\Xi$ magnetic,
that is, closed and basic. 
\item
There is a function $F: B\to\R_{>0}$ such that $L_X(F\sigma^2)=0$
where $X$ is the vector field associated to $\Ham$ via $\sigma$.
\end{enumerate}
Then
\[ 
 \delta\sigma = -d(\log F)
 \text{ and } 
 d(F\sigma)=0.
\]
\end{theorem}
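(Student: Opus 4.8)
The plan is to realise $\sigma$ as a locally conformally symplectic form whose Lee form coincides with $\delta\sigma$, and then to show that the invariant--measure hypothesis (2) pins this Lee form down to $-d(\log F)$. The argument rests on the identity $d\sigma=\theta\wedge\sigma$ for a suitable \emph{basic} $1$-form $\theta$, on the observation that on the $4$-manifold $T^*B$ the top form $\sigma^2$ is the Liouville volume, and on Libermann's criterion (recalled above), which identifies $\theta$ with the codifferential $\delta\sigma$.

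First I would extract the consequences of (1). Since $\Om=\Om^B+\Xi$ with $\Om^B$ canonical and $\Xi$ closed, $\Om$ is closed and $d\sigma=d\Lambda$. Working in a cotangent chart $(q^1,q^2,p_1,p_2)$ with $\Om^B=dp_i\wedge dq^i$ and writing the semibasic, fibrewise linear form as $\Lambda=(a^1p_1+a^2p_2)\,dq^1\wedge dq^2$ (so $a^i=\partial l/\partial p_i$ depend only on $q$), a direct computation gives
\[
 d\sigma=d\Lambda=a^1\,dp_1\wedge dq^1\wedge dq^2+a^2\,dp_2\wedge dq^1\wedge dq^2=\theta\wedge\sigma,\qquad \theta:=a^1\,dq^2-a^2\,dq^1 .
\]
Thus $\sigma$ carries a Lee form $\theta$, and crucially $\theta$ is basic, being pulled back from $B$; by Libermann's criterion the Lee form equals the codifferential, $\delta\sigma=\theta$. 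In the same chart one checks $\Lambda\wedge\Lambda=\Lambda\wedge\Om=\Xi\wedge\Om^B=0$, so that $\sigma^2=(\Om^B)^2$ is a constant multiple of the Liouville volume; in particular $\sigma^2$ is nowhere zero.

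Next I would feed this into (2). Writing $\iota_X\sigma=d\Ham$ one has $L_X\sigma=\iota_X\,d\sigma=\iota_X(\theta\wedge\sigma)=\theta(X)\,\sigma-\theta\wedge d\Ham$. Contracting the identically vanishing $5$-form $\sigma\wedge\theta\wedge\sigma$ with $X$ yields $\sigma\wedge\theta\wedge d\Ham=\tfrac12\,\theta(X)\,\sigma^2$, hence $\sigma\wedge L_X\sigma=\tfrac12\theta(X)\,\sigma^2$ and $L_X(\sigma^2)=\theta(X)\,\sigma^2$. Therefore
\[
 L_X(F\sigma^2)=\big((XF)+F\,\theta(X)\big)\,\sigma^2=F\,\big(\theta+d(\log F)\big)(X)\,\sigma^2,
\]
and since $\sigma^2$ is a volume form, hypothesis (2) is equivalent to the pointwise scalar identity $\big(\theta+d(\log F)\big)(X)=0$ throughout $T^*B$.

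The remaining, and main, step is to upgrade this scalar identity to the equality of $1$-forms $\theta=-d(\log F)$; this is exactly where the dimension-$4$ case is genuinely weaker than the higher-dimensional one, in which closedness of the Lee form is automatic, and where the specific nature of $\Ham$ must enter. Both $\theta$ and $d(\log F)$ are basic, the former by the computation above and the latter because $F$ is a function on $B$. For the kinetic energy Hamiltonian, the base projection $T\pi(X)$ at a point $(q,p)$ is, up to sign, the metric sharp $p^\sharp\in T_qB$ (with $\pi\colon T^*B\to B$ the cotangent projection), and these vectors sweep out all of $T_qB$ as $p$ ranges over $T_q^*B$. A basic $1$-form annihilating every such $X$ must vanish; hence $\theta+d(\log F)=0$, i.e.\ $\delta\sigma=\theta=-d(\log F)$. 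Finally $d(F\sigma)=F\,\big(d(\log F)+\theta\big)\wedge\sigma=0$, as claimed. The crux is precisely this last passage, which works because $\Ham$ is a nondegenerate kinetic energy, so that the invariant--measure hypothesis (2) constrains $\theta$ in every cotangent direction of $B$.
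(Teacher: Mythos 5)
Your proof is correct and follows essentially the same route as the paper's: both rest on the two-dimensional identity $d\sigma=\delta\sigma\wedge\sigma$, on the fact that the Lee form $\delta\sigma$ is basic (which you verify by direct coordinate computation where the paper defers to Lemma~\ref{l:delta-sigma}), and on the regularity of the kinetic-energy Hamiltonian to upgrade the pointwise identity $(\delta\sigma+d\log F)(X)=0$ to the vanishing of the basic one-form itself. The only cosmetic difference is that you derive the Lee form explicitly from $d\Lambda$ and identify it with $\delta\sigma$ via Libermann's criterion, whereas the paper invokes the Libermann--Marle formula directly.
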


\begin{proof}
The following formula can be found in \cite{LM87}:
\[
 d\sigma = \delta\sigma\wedge\sigma
\]
which holds since $\dim B = 2$, and thus
\begin{equation}\label{e:2d-condition}
 d(f\sigma)
 =
 (\delta\sigma+d(\log f))\wedge f\sigma
\end{equation}
for an arbitrary smooth function $f: T^*B\to\R$.
Therefore,
\[
 0
 =
 L_X(F\sigma^2)
 =
 2d(Fd\Ham\wedge\sigma)
 =
 2(dF+F\delta\sigma)\wedge d\Ham\wedge\sigma.
\]
Using the special structure of $\Lam$ we can show that $\delta\sigma$
is basic. (See Lemma~\ref{l:delta-sigma}.)
Therefore, since $\Ham$ is natural it follows that  
$dF+F\delta\sigma = 0$. Thus $d(F\sigma)=0$ by \eqref{e:2d-condition}.
\end{proof}

In particular, this proves Hamiltonization of the $3D$-Chaplygin
ball at the ultimate reduced level
 -- the $T^*S^2$-level which can be
attained after truncation.  
It is remarkable that this theorem as well as its crucial assumption
-the preserved measure- had already been found by
Chaplygin. Nevertheless, he could not apply these facts to conclude
Hamiltonizability of the problem. This is probably due to the fact
that it is not entirely straightforward to reduce all the relevant
structure in a coherent manner to the $T^*S^2$-level. See
\cite{HG09}. Indeed, it was Borisov and Mamaev~\cite{BM01,BM05} who
invented a proof of Hamiltonizability of this system.

\subsection{A multiplier theorem for higher dimensions}

Let $(M,\sigma)$ be a $2m$-dimensional almost symplectic manifold with
codifferential $\delta$. According to \cite{L55},
\cite[Proposition~I.16.5]{LM87}  there is a certain (\emph{effective})
$3$-form
$\psi$ such that 
\begin{equation}\label{E:dsigma}
 d\sigma = \psi + \by{1}{m-1}\delta\sigma\wedge\sigma.
\end{equation}
Moreover, $\sigma$ is \emph{locally} conformal symplectic if and only
if $\psi = 0$. 

Thus for an almost Hamiltonian system
$(T^*B=M,\sigma,\Ham)$ with dynamics given by $X=\sigma^{-1}d\Ham$
there are two obvious necessary conditions for a function $F:
B\to\R_{>0}$ to be a conformal factor ($d(F\sigma)=0$). 
Firstly, $\psi = 0$. Secondly, there is a preserved measure,
$L_X(F^{m-1}\sigma^n)=0$. 

The following statement
attempts to reverse the situation: 
When $\psi$ vanishes we know
that the structure is locally conformally symplectic; when there is
additionally a preserved measure then we can turn this local statement
to a global one. 

In fact, we will consider a slightly more general situation by
allowing the almost Hamiltonian system to have additional internal
degrees of freedom: Let $H\hookto S\toto B$ be a principal fiber
bundle which is at the same time a Riemannian submersion. That is,
$(S,\mu_S)$ and $(B,\mu_B)$ are Riemannian manifolds, $\mu_S$ is
$H$-invariant and the bundle projection map induces an isometry
$\hor(\mu_S) = \ver^{\bot} \to TB$. Let us denote the connection form corresponding
to $\hor(\mu_S)$ by $A: TS\to\ho$. This is the mechanical connection
on $(S,\mu_S)$ (and should not be confused with the $\A$ appearing in
Section~\ref{sec:chap-based-on-lie}).
We suppose that $T^*S$ is equipped with an almost symplectic form
$\wt{\Om} := \Om^S+\Lam$ where $\Lam$ is $H$-basic with respect to 
$T^*S\toto (T^*S)/H$, 
semi-basic with respect to
$T^*S\to S$ and linear in the fibers of $T^*S$. Thus $\wt{\Om}$ admits
a \momap $J_H: T^*S\to\ho^*$ which is the standard one, since $\Lam$
vanishes upon insertion of infinitesimal generators of the
$H$-action.

Further, assume that there is a right Hamiltonian $H$-space
$(F,\Om^F)$ with equivariant \momap $J_F: F\to\ho^*$. 

Then we consider the diagonal action of $H$ on $T^*S\times F$ where
the $H$-action on the second factor is inverted to give a left
action. This action admits a \momap which is given by $J := J_H -
J_F$.  
Notice that $(s,u,f)\in J^{-1}(0)$ if and only if $u =
u_0+A_s^*(J_F(f))$ with $u_0\in\hor^*_s$.
Thus we may pass to the reduced space 
\[
 J^{-1}(0)/H
 \cong
 T^*B\times_B(S\times_H F) =: \mathcal{W}
\]
where the isomorphism is defined in terms of the connection $A$. 
In particular, the reduced space $\mathcal{W}$ is a (symplectic) 
fiber bundle over $T^*B$ with fiber $F$.
By construction the form $\wt{\Om}+\Om^F$ is basic when restricted to
$J^{-1}(0)$ and passes to an almost symplectic
form on 
$T^*B\times_B(S\times_H F)$ 
which we shall denote by
$
 \sigma_A
$
to emphasize the $A$-dependence.
This
is, of course, the Weinstein construction rewritten for a semi-basic
perturbation of the standard symplectic form on $T^*S$. 
By the usual computation one sees that
\begin{equation}\label{e:sa}
 \sigma_A
 =
 \Om^B - \vv<J_F,\curv^A> + \Lam_0 + \Om^F
\end{equation}
where $\Om^B$ is the canonical symplectic form on $T^*B$, the second
term is magnetic and $\Lam_0$ is the non-closed semi-basic term
induced from $\Lam$. 

The situation which we have in mind is that of
\cite[Corollary~4.2]{HG09}.

\begin{theorem}\label{thm:mult}
Consider the natural kinetic energy Hamiltonian $\Ham: T^*S\to\R$
associated to the metric $\mu_S$
and let $\Ham: \W\to\R$ also denote the induced function.
Let $m = \by{1}{2}\dim\mathcal{W}$, $n=\dim B$ and $k=\by{1}{2}\dim
F$, whence $m=n+k$.
Assume that:
\begin{enumerate}[\up (1)]
\item
There is a function $F: B\to\R_{>0}$ such that $L_X(F^{m-1}\sigma_A^m)=0$
where $X$ is the vector field associated to $\Ham$ via $\sigma_A$.
($\sigma_A^m=(\Om^B)^n\wedge(\Om^F)^k$.)
\item
$\psi = 0$, or, equivalently $d\sigma_A = \by{1}{m-1}\delta\sigma_A\wedge\sigma_A$.
\end{enumerate}
Then
\[
 (m-1)d\log F = -\delta\sigma_A
 \textup{ and }
 d(F\sigma_A)=0,
\]
that is, the almost Hamiltonian system $(\W,\sigma_A,\Ham)$ with
dynamics given by $X=\sigma_A^{-1}d\Ham$ can be transformed to a Hamiltonian
system $(\W,F\sigma_A,\Ham)$ with rescaled dynamics $F^{-1}X$. 
\end{theorem}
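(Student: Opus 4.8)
The plan is to follow the strategy of the two-dimensional Chaplygin theorem proved above, replacing the single-power identity $d\sigma=\delta\sigma\wedge\sigma$ by its $2m$-dimensional counterpart \eqref{E:dsigma} and the Liouville volume $\sigma^2$ by $F^{m-1}\sigma_A^m$. First I would record the conformal identity: assumption~(2) gives $d\sigma_A=\tfrac{1}{m-1}\delta\sigma_A\wedge\sigma_A$, so for any positive function $f$,
\[
 d(f\sigma_A)
 =
 f\Bigl(d\log f+\tfrac{1}{m-1}\delta\sigma_A\Bigr)\wedge\sigma_A .
\]
Hence it suffices to prove that the basic $1$-form $\beta:=(m-1)\,dF+F\,\delta\sigma_A$ vanishes: then $(m-1)d\log F=-\delta\sigma_A$ is immediate, and substituting $f=F$ into the identity above yields $d(F\sigma_A)=0$ (so $F\sigma_A$ is symplectic and $\iota_{F^{-1}X}(F\sigma_A)=d\Ham$, giving the rescaled dynamics).

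Next I would extract $\beta$ from the measure condition~(1). Since $F^{m-1}\sigma_A^m$ is a top form, $L_X(F^{m-1}\sigma_A^m)=d\,\iota_X(F^{m-1}\sigma_A^m)=m\,d\bigl(F^{m-1}\,d\Ham\wedge\sigma_A^{m-1}\bigr)$, using $\iota_X\sigma_A=d\Ham$. Expanding this differential, with $d(\sigma_A^{m-1})=\delta\sigma_A\wedge\sigma_A^{m-1}$ coming again from \eqref{E:dsigma}, and collecting terms gives
\[
 0 = L_X(F^{m-1}\sigma_A^m)
 = m\,F^{m-2}\,\beta\wedge d\Ham\wedge\sigma_A^{m-1}.
\]
The clean way to read this pointwise is the contraction identity $\beta\wedge d\Ham\wedge\sigma_A^{m-1}=\tfrac{1}{m}\,\beta\wedge\iota_X(\sigma_A^m)=\tfrac{1}{m}\,\beta(X)\,\sigma_A^m$; since $F>0$ and $\sigma_A^m\neq0$ this forces $\beta(X)=0$ as a function on $\W$. (Note this uses only nondegeneracy of $\sigma_A$, not any Lefschetz argument.)

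It then remains to conclude $\beta=0$ from $\beta(X)\equiv0$, and this is where the hypotheses on $\Ham$ and $\Lam$ enter. The special (semi-basic, fibre-linear, $H$-basic) structure of $\Lam$, hence of $\Lam_0$ in \eqref{e:sa}, should show that $\delta\sigma_A$ is basic with respect to $\W\to B$ — the higher-dimensional analogue of Lemma~\ref{l:delta-sigma} — and $dF$ is basic since $F$ lives on $B$, so $\beta=p^*\bar\beta$ for some $\bar\beta\in\Om^1(B)$. On the other hand, because $\Ham$ is the natural kinetic energy, the base projection of $X$ is governed by the canonical pairing in $\Om^B$, the semi-basic perturbations $-\vv<J_F,\curv^A>$ and $\Lam_0$ affecting only the fibre momenta; thus over a fixed $q\in B$ the vector $p_*X_w$ runs through all of $T_qB$ as $w$ ranges over the fibre of $\W\to B$. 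Therefore $\beta(X)_w=\bar\beta_q(p_*X_w)=0$ for all such $w$ forces $\bar\beta_q=0$, i.e.\ $\beta=0$.

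The main obstacle is precisely the basic-ness of $\delta\sigma_A$: computing the codifferential of the non-closed term $\Lam_0$ and verifying that it carries no fibre-direction component is the real work, exactly as in the two-dimensional case it was isolated in Lemma~\ref{l:delta-sigma}. Everything else — the conformal identity, the expansion of the preserved-measure condition, and the contraction identity reducing the wedge equation to the scalar equation $\beta(X)=0$ — is routine once \eqref{E:dsigma} and condition~(2) are in hand.
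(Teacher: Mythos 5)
Your proposal is correct and follows essentially the same route as the paper's proof: the conformal identity derived from \eqref{E:dsigma}, expansion of $L_X(F^{m-1}\sigma_A^m)=0$, basic-ness of $\delta\sigma_A$ via Lemma~\ref{l:delta-sigma} (which in the paper is already stated in exactly this $\mathcal{W}$-setting, so no further ``analogue'' is needed), and the regularity of the kinetic-energy Hamiltonian to kill the resulting basic one-form. The only difference is cosmetic: you reduce $\beta\wedge d\Ham\wedge\sigma_A^{m-1}=0$ to the scalar equation $\beta(X)=0$ by the coordinate-free contraction identity, whereas the paper expands everything in Darboux coordinates to reach the equivalent statement $\sum\phi_a\,\partial\Ham/\partial p_a=0$.
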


%

\begin{proof}
According to \eqref{E:dsigma} we have
\begin{equation}\label{e:d(fsigma)}
 d(f\sigma_A)
 =
 \by{1}{m-1}(\delta\sigma_A+(m-1)d\log f)\wedge f\sigma_A + f\psi
\end{equation}
for all smooth functions $f: \mathcal{W}\to\R_{>0}$. 

We use local Darboux coordinates $q^a,p_a$ on $T^*B$.
Because of Lemma~\ref{l:delta-sigma} the one-form $\delta\sigma_A$ is
basic. Thus we have 
\[ 
 (m-1)d\log F + \delta\sigma_A = \sum\phi_a(q)dq^a
\]
in the local coordinates. Now,
\begin{align*} 
 0
 &= 
 di_X(F^{m-1}\sigma_A^m)
 =
 md(F^{m-1}d\Ham\wedge\sigma_A^{m-1})\\
 &=
 m((m-1)F^{m-2}dF\wedge d\Ham\wedge\sigma_A^{m-1}
   - F^{m-1}d\Ham\wedge\delta\sigma_A\wedge\sigma_A\wedge\sigma_A^{m-2})\\
 &=
 mF^{m-1}((m-1)d\log F + \delta\sigma_A)\wedge d\Ham\wedge\sigma_A^{m-1}\\
 &=
 mF^{m-1}\sum\phi_adq^a
     \wedge\sum\dd{p_b}{\mathcal{H}}dp_b 
     \wedge(\sum dq^c\wedge d p_c)^{m-1} \wedge(\Om^F)^k\\
 &= 
 \by{mF^{m-1}}{(m-1)!}
   \sum\phi_a\dd{p_a}{\mathcal{H}}dq^1\wedge dp_1\wedge
     \ldots\wedge dq^m\wedge dp_m \wedge(\Om^F)^k.
\end{align*}
Since $\phi_a$ depends only on $q$ and $\Ham$ is regular it
follows that $\phi_a=0$.
Because $\psi=0$ in (\ref{e:d(fsigma)}) this finishes the proof.
\end{proof}

\begin{lemma}\label{l:delta-sigma}
Under the assumptions of Theorem~\ref{thm:mult},
$\delta\sigma_A$ 
is basic with respect to the projection
$\mathcal{W}\to T^*B\to B$.
\end{lemma}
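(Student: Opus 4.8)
The plan is to localize all the non-closedness of $\sigma_A$ in the single term $\Lam_0$ and then to win by a form-degree count against the Poisson bivector of $\sigma_A$. First I would put the codifferential into a computable shape: applying the contraction $\Lambda=\iota_\pi$ (with $\pi$ the bivector dual to $\sigma_A$) to \eqref{E:dsigma} and using that $\Lambda$ kills the effective part, $\Lambda\psi=0$, together with the standard identity $\Lambda(\beta\wedge\sigma_A)=(m-1)\beta$ for a $1$-form $\beta$, yields $\delta\sigma_A=\Lambda(d\sigma_A)$. Next, in the decomposition \eqref{e:sa} the terms $\Om^B-\langle J_F,\curv^A\rangle+\Om^F$ are precisely the reduced (minimal-coupling) form descending from the closed form $\Om^S+\Om^F$ on $T^*S\times F$, hence are closed; so $d\sigma_A=d\Lam_0$ and it suffices to analyse $\Lambda(d\Lam_0)$. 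Since $\Lam_0$ is semi-basic over $B$, i.e. $\Lam_0=\tfrac12\sum l_{ab}\,dq^a\wedge dq^b$ with each $l_{ab}$ linear in the momenta $p$, the $3$-form $d\Lam_0$ carries in every monomial at most one $dp$ and at most one differential $dy^i$ along the fibre $F$.

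The crux is then a cheap counting argument. The form $\sigma_A$ has no mixed $dq\wedge dy$ or $dp\wedge dy$ parts: $\Om^B$ is pure $dq\wedge dp$, the magnetic and $\Lam_0$ terms are pure $dq\wedge dq$, and $\Om^F$ is pure $dy\wedge dy$ (this is exactly what underlies the factorisation $\sigma_A^m=(\Om^B)^n\wedge(\Om^F)^k$). Consequently $\pi$ is block diagonal, $\pi=\pm\sum_a\partial_{q^a}\wedge\partial_{p_a}+\tfrac12 B^{ab}\partial_{p_a}\wedge\partial_{p_b}+\pi_F$. Contracting $d\Lam_0$ with the last two pieces gives zero: the $\partial_p\wedge\partial_p$ block needs two $dp$-slots and $\pi_F$ needs two fibre slots, while $d\Lam_0$ offers at most one of each. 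Hence only the canonical piece survives, and a one-line computation gives the \emph{horizontal} $1$-form $\delta\sigma_A=\sum_b\big(\sum_a\partial l_{ab}/\partial p_a\big)\,dq^b$.

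It remains to upgrade "horizontal" to "basic", i.e. to show the surviving coefficients $\sum_a\partial l_{ab}/\partial p_a$ depend on the base point $q$ alone; this is the step I expect to be the real obstacle. Linearity of $\Lam_0$ in $p$ already makes them independent of $p$, so only their residual fibre ($F$-) dependence is at issue. Here I would trace the origin of $\Lam_0$: the form $\Lam$ on $T^*S$ is fibre-linear with coefficients that are functions on $S$ (not on $F$), and the fibre dependence of $\Lam_0$ is introduced \emph{only} through the substitution $u=u_0+A^*(J_F(f))$ that cuts out $J^{-1}(0)$. This substitution enters $\Lam_0$ additively and $p$-independently — the $p$-linear part retains the $S$-, hence $q$-, dependent coefficients — so it is annihilated by $\partial/\partial p_a$. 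Therefore $\sum_a\partial l_{ab}/\partial p_a$ is a function of $q$ only, and $\delta\sigma_A$ is the pullback of a $1$-form on $B$, which is the assertion of the lemma.
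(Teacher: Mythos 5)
Your proof is correct, but it reaches $\delta\sigma_A$ by a different mechanism than the paper. The paper writes $\delta\sigma_A=\sum(C_a\,dq^a+C^a\,dp_a+D_i\,df^i)$ and extracts the coefficients from the hypothesis $\psi=0$, i.e.\ from the identity $d\Lam_0=\tfrac{1}{m-1}\delta\sigma_A\wedge\sigma_A$, by inserting pairs $\dd{p_a}{},\dd{p_b}{}$ (killing the $C^a$), fibre vectors (killing the $D_i$), and finally a triple $\dd{q^b}{},\dd{q^a}{},\dd{p_a}{}$ to identify $C_a$ with a vertical derivative of $\Lam$. You instead compute $\delta\sigma_A=\Lambda(d\Lam_0)$ outright via the contraction with the Poisson bivector, using $\Lambda\psi=0$ and $\Lambda(\beta\wedge\sigma_A)=(m-1)\beta$; the same degree count (each monomial of $d\Lam_0$ carries at most one $dp$ and one $df$, while the $\dd{p}{}\wedge\dd{p}{}$ and $\pi_F$ blocks of the bivector each need two) is what underlies both arguments, but your route never invokes hypothesis (2) of Theorem~\ref{thm:mult} and produces the explicit formula $\delta\sigma_A=\sum_b\big(\sum_a\partial l_{ab}/\partial p_a\big)dq^b$, which is a genuine (if modest) strengthening. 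Your last step is also a welcome clarification: the paper passes from $C_a(q,p,f)$ to $C_a(q)$ without comment on the possible $f$-dependence, whereas you trace it to the shift $u=u_0+A^*_s(J_F(f))$, which by fibre-linearity contributes only to the $p$-independent part of $\Lam_0$ and hence is killed by $\partial/\partial p_a$. The one point you should make explicit is the jump from ``coefficients are functions on $S$'' to ``functions on $B$'': that descent uses the assumed $H$-basicness of $\Lam$, not just its semi-basicness and fibre-linearity.
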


\begin{proof}
We use local Darboux coordinates $q^a,p_a$ on $T^*B$ and coordinates
$f^i$ on $F$. According to \eqref{e:sa} we may
write $\sigma_A$ terms of 
\begin{equation}\label{e:sa-loc}
 \Om^B = \sum dq^a\wedge dp_a,
 \quad
 \vv<J_F,\curv^A> = \sum\Xi_{ab}dq^a\wedge dq^b,
 \quad
 \Lam_0 = \sum\Lam_{ab}dq^a\wedge dq^b,
 \quad
 \Om^F = \sum\Om^F_{ij}df^i\wedge df^j.
\end{equation}
Let us write $\delta\sigma_A$ as 
\[
 \delta\sigma_A =
 \sum(C_a(q,p,f)dq^a + C^a(q,p,f)dp_a + D_i(q,p,f)df^i).
\]
We need to show that $C^a=0$, $D_i=0$ and $C_a=C_a(q)$.
Using the relation 
\[
 d\sigma_A
 =
 d\Lam_0
 =
 \by{1}{m-1}\delta\sigma_A\wedge\sigma_A,
\]
expanding it in terms of \eqref{e:sa-loc},
and inserting a pair $\dd{p_a}{},\dd{p_b}{}$ of vertical vectors on
both sides we see that $C^a=0$ for all $a$. Similarly one sees that
$D_i=0$. Now we insert vectors
$\dd{q^b}{},\dd{q^a}{},\dd{p_a}{}$ on both sides, and see that
$C_a(q,p)
= d_v\Lam_{ba}(\dd{p_a}{}) = C_a(q)$. (It is here that we use that $\Lam$ is linear in the
fiber.) 
\end{proof}


\section{Chaplygin systems associated to semisimple Lie
  groups}\label{sec:chap-based-on-lie} 

We associate a Chaplygin type system to a Cartan decomposition 
(and choice of a restricted root system)
of an
arbitrary (real) semisimple Lie group. In Section~\ref{sub:SO}
it is shown that this construction generalizes the
classical $n$-dimensional Chaplygin ball system.
For background on semi-simple Lie groups we refer to Knapp~\cite{K02}.

\subsection{Configuration space and constraints}

Let $G$ be a semisimple Lie group with Lie algebra $\gu$ and Killing
form $B$. Consider a Cartan decomposition $\gu = \ko\oplus\po$ 
associated to the Cartan involution $\theta$,
and let
$G\cong K\times\po$, $g=k\exp x \leftmapsto(k,x)$ be the
corresponding decomposition of the group. Thus:
\[
[\ko,\ko]\subset\ko,\qquad
[\ko,\po]\subset\po,\qquad
[\po,\po]\subset\ko.
\]
Fix a maximal abelian subspace $\ao\subset\po$, and put
$\mo=Z_{\ko}(\ao)$ and $M=Z_K(\ao)$. 
Fix also an element
$w_0\in\ao$.\footnote{This corresponds to the vertical vector orthogonal to
 the table in the case of the $n$-dimensional Chaplygin ball.} 
Define $Z_K(w_{0}) = H$ to be the stabilizer of this vector, and
note that
\begin{equation}\label{e:w}
 \ad(w_{0})|\ho^{\bot}: \ho^{\bot}:=\ho^{B\bot}\cap\ko
 \longto
 \ad(w_0)(\ho^{\bot}) =: V\subset\po
\end{equation}
is an isomorphism onto its image $V$. Of course, if $w_0$ is regular
then $H=M$ and $V=\ao^{\bot}\cap\po$.

The configuration space is now defined to be
\[
 Q := K\times V.
\]
The Lagrangian is the natural kinetic energy Lagrangian $L$ which is
associated to the positive definite inner product
$B_{\theta} = -B(.,\theta.) = -B|\ko + B|V$
taking into account the inertia tensor which is a symmetric positive
definite endomorphism $\mathbb{I}$ of $(\ko,-B|\ko)$.
Thus
\[
 L 
 = 
 \by{1}{2}\vv<\mathbb{I}u,u> + \by{1}{2}\vv<x',x'>
\]
where $\vv<.,.>=B_{\theta}$.
This Lagrangian is \emph{left}-invariant (i.e., invariant with respect
to left multiplication of $K$ on the first factor of $Q$) since we identify
$TK=K\times\ko$ via the \emph{left} multiplication, $u=s^{-1}s'$.

The distribution is 
\[
 \D = \set{(s,u,x,-\A_s(u))} \subset TK\times TV 
\]
where 
\begin{equation}\label{e:A}
 \A: (s,u)\longmapsto -[\Ad(s)u,w_{0}] = -\pr_V([\Ad(s)u,w_{0}]),\;
 TK\longto V
\end{equation}
and $w_{0}$ has been fixed to define the isomorphism \eqref{e:w}.

$(Q,\D,L)$ is a $V$-Chaplygin system with
abelian Lie group $V$. This precisely means that $(Q,\D,L)$ is a
non-holonomic system which is invariant under the free and proper  
action of the
abelian Lie group $V$ and that the distribution $\D$ determines a
principal bundle connection on $Q\toto Q/V$. The following are
essential observations.
\begin{enumerate}[\up (1)]
\item
$\A: TK\to V$ is the connection form associated to $\D$ on the principal fiber bundle
$V\hookto Q\toto K$.
\item
$\A$ is right invariant.
\end{enumerate}
The group $H=\set{h\in K: \Ad(h)w_0=w_0}$ acts through \emph{two}
different actions on $Q$:
\begin{enumerate}[\up (1)]
\item[\up (3)]
The $l$-action: 
$l_h(s,x) = (hs,x)$. This action generates \emph{internal} symmetries:
$\A\zeta^l_Y = 0$ for all $Y\in\ho$. 
($\zeta_Y^l(s)=\Ad(s^{-1}).Y$) 
\item[\up (4)]
The $d$-action: $d_h(s,x) = (hs,hx)$.
This action generates \emph{external} symmetries.
$\A(hs,u) = h.\A(s,u)$ for all $h\in H$. Thus $\D$ is invariant under
the $d$-action.
\end{enumerate}
This should be compared to the set-up in \cite{HG09}.

\subsection{Non-holonomic reduction: The compressed system}

Compression refers to the passage from the non-holonomic system $(Q,\D,L)$
with (external) symmetry group $V$
to an almost Hamiltonian system $(T^*(Q/V),\Omnh,\Hamc)$. 
Identify $T^*K=TK$ via the induced metric $\mu_0$.
According to general results on compression in the presence of internal
symmetries (e.g., \cite{EKMR04,HG09,BS93,K92}):

The compressed Hamiltonian is 
\[
 \Hamc(s,u)
 =
 \by{1}{2}\vv<\mathbb{I}u,u>+\by{1}{2}\vv<\A_s(u),\A_s(u)>
\]
which is $H$-invariant. 
The compressed almost symplectic form is
\[
 \Omnh = \Om^K-\vv<J_V\circ\textup{hl}^{\mathcal{A}},\curv_0^{\mathcal{A}}>_V
  = \Om^K + \vv<\A,d\A>_V
\]
which is also $H$-invariant.
The dynamics are given by $\Xnh$:
\[
 i(\Xnh)\Omnh = d\Hamc.
\]
Finally, according to the non-holonomic Noether Theorem 
there is a conserved quantity: 
\[
 J_H: TK\to\ho^*
\]
which is the standard \momap. 

What about reduction?
Can this data be reproduced on a quotient of the form
$J_H^{-1}(\lam)/H_{\lam}$ for some value $\lam\in\ho^*$. 
Just like in, e.g., \cite{HG09} the problem that arises is that $J_H$
is (for $w_0\neq0$) not a \momap with respect to $\Omnh$. Thus the restriction
of $\Omnh$ to a level set $J_H^{-1}(\lam)$ is not horizontal with
respect to the induced action of the stabilizer subgroup $H_{\lam}$. We
will return to this problem in Section~\ref{sec:trunc}.

\subsection{Example: $\SO(p,q)$ and Chaplygin's ball}\label{sub:SO}

Let $G=\SO(p,q)_0$ with $p\ge q$. Then the spaces under consideration
are the following.
\begin{align*}
 K 
 &=
 \set{\diag{A,D}: A\in\SO(p), D\in\SO(q)}\\
 \po
 &=
 \set{
  \left(
    \begin{matrix}
      0_{p\times p} & b\\
      b^t & 0_{q\times q}
    \end{matrix}
  \right):
  b\in\gl(p\times q,\R)}
\end{align*}
and
\begin{align*}
 \ao
 &=
 \set{ 
    \left(
      \begin{matrix}
        0_{p\times p} & b\\
        b^t & 0_{q\times q}
      \end{matrix}
    \right):
    b \textup{ has only lower antidiagonal non-zero}
  }
 = \R^q\\
 M
 &=
 \set{\diag{\SO(p-q),\theta_q,\dots,\theta_1,\theta_1,\dots,\theta_q}:
   \theta_i = \pm 1, \Pi\theta_i=1}
 = 
 \SO(p-q)\times\set{\pm1}^{q-1}.
\end{align*}
Therefore,
\begin{align*}
 K/M
 &= 
 (\SO(p)/\SO(p-q)\times\SO(q))/\set{\pm1}^{q-1}
 \cong
 V(q,p)\times\SO(q)/\set{\pm1}^{q-1}
\end{align*}
which is the ultimate reduced configuration space.

\subsubsection{Special case $q=1$, $p\ge3$}
In this case there is only one positive root and assuming that
$w_0\neq0$ yields the following. 
\begin{align*}
 K &= \SO(p)\times\set{1}\\
 \po
 &=
 \set{
  \left(
    \begin{matrix}
      0_{p\times p} & b\\
      b^t          & 0
    \end{matrix}
  \right):
  b\in\gl(p\times 1,\R) = \R^p}\\
 \ao &\cong \R^1 \textup{ and } V=\ao^{\bot}\cong\R^{p-1}\\
 H &= M \cong \SO(p-1)
\end{align*}
Thus,
\[
 \gu
 =
 \left(
 \begin{matrix}
   \mathfrak{so}(p) & \mathbb{R}^p\\
   (\mathbb{R}^p)^* & 0
 \end{matrix}
 \right)
\textup{
and
}
 w_0 
 :=
 \left(
 \begin{matrix}
  0     &  e_p\\
  e_p^t &  0
 \end{matrix}
 \right)
 \in
 \ao\subset\gu
\]
yield
\[
 \A_s(u)
 = -\textup{pr}_V[\Ad(s)u,w_0]
 =
 \left(
 \begin{matrix}
 0               &  -(\Ad(s)u).e_p\\
 -((\Ad(s)u).e_p)^t &  0
 \end{matrix}
 \right)
 \in V
\]
which can be identified with the connection form
\[
 T\SO(p)\longto\R^{p-1},
 \text{ }
 (s,u)
 \longmapsto
 -\textup{pr}_{\mathbb{R}^{p-1}}\Big((\Ad(s)u).e_p\Big)
\]
describing the
$p$-dimensional Chaplygin system when mass and radius of the ball are
both
set to $1$. See \cite{FK95,EKMR04,HG09}.
Moreover,
\[
 K/M = V(1,p) = S^{p-1}
\]
whence we recover the $p$-dimensional Chaplygin ball. (The
Lagrangian $L$ also identifies in the expected way.)

\subsection{Describing the system}\label{sec:description}
In this section we introduce notation and formulae that will be used
very much in the subsequent.
Let $\Sigma$ be the set of restricted roots associated to the pair
$(\gu,\ao)$ and $\Sigma_+\subset \Sigma$ a choice of positive
roots. 
Then the associated root space decomposition is 
\[
 \gu
 =
 \gu_0\oplus\oplus_{\lam\in\Sigma}\gu_{\lam}
 \textup{ where }
 \gu_{0} = \mo\oplus\ao.
\]
Moreover, we choose an orthonormal system
\[
 Y_{\alpha}, \; \alpha = 1,\ldots, \dim \mo
 \textup{ and }
 Z_{(\lam,a)}, \;
 \lam\in\Sigma_+,\, a = 1,\dots,\dim\gu_{\lam} 
\]
that is adapted to the decomposition $\ko=\mo\oplus\mo^{\bot}$, and an
orthonormal basis
\[ 
 e_{(\lam,a)},\;
 \lam\in\Sigma_+,\,
 a = 1,\ldots,\dim\gu_{\lam}
\]
of $\ao^{\bot}\cap\po$. We assume further the relations
\begin{equation}\label{e:rel}
 \ad(w)Z_{(\lam,a)}
 =
 \lam(w)e_{(\lam,a)}
 \textup{ and }
 \ad(w)e_{(\lam,a)} 
 =
 \lam(w)Z_{(\lam,a)}
\end{equation}
for all $w\in\ao$.
Such a basis always exists. In the following  we will use the
convention that $\alpha,\beta,\gamma,\dots$ take values
$1,\dots,\dim\mo$, and pairs $(\lam,a),(\mu,b),(\nu,c)$ have their
first component in $\Sigma_+$ while the second component runs from $1$
to the dimension of the corresponding root space. 
The basis vectors $Y_{\alpha}$, $Z_{(\lam,a)}$ as well as their dual
basis are right extended to give a right invariant frame and coframe
\[
 \xi_{\alpha}, 
 \zeta_{(\lam,a)}
 \textup{ and }
 \rho^{\alpha},\eta^{(\lam,a)}
\]
of $K$. With respect to the left trivialization this frame and coframe
becomes
\[
 \xi_{\alpha}(s) 
 = \Ad(s^{-1})Y_{\alpha} 
 = s^{-1}Y_{\alpha} 
 \textup{ and }
 \rho^{\alpha}(s)(u) 
 = \vv<\Ad(s^{-1})Y_{\alpha},u> 
 = \vv<s^{-1}Y_{\alpha},u>, 
\]
etc.  
(We will often suppress the $\Ad$-notation and simply write $s^{-1}Y$
for $\Ad(s^{-1})Y$.) 
It will be convenient to use the
notation 
\[
 l_{\alpha} = \rho^{\alpha}: TK\to\R
 \textup{ and }
 g_{(\lam,a)} = \eta^{(\lam,a)}: TK\to\R
\]
when we view the $1$-forms as functions on the tangent bundle.
These functions are the components of the angular velocity of the
ball with respect to the space frame. 
Thus the component of $\Xnh$ which is tangent to the group can
be written as
\begin{equation}\label{e:Xnh}
 T\tau.\Xnh = \sum l_{\alpha}\xi_{\alpha} + \sum g_{(\lam,a)}\zeta_{(\lam,a)}
\end{equation}
where $\tau: TK = K\times\ko\to K$. Moreover, it will be convenient to
have the notation 
\[ 
 G_{(\lam,a)} := g_{(\lam,a)}\circ\mu_0: TK\longto\R
\]
where we view $\mu_0$ as a bundle endomorphism $TK = K\times\ko\to
K\times\ko^* =_{\langle.,.\rangle} K\times\ko$.
The Liouville one-form can now be written as
\[
 \theta^K
 =
 \sum l_{\alpha}\rho^{\alpha} + \sum G_{(\lam,a)}\eta^{(\lam,a)}.
\]

With this notation we derive the following simple formula for the connection
form $\A$ which will be central to the subsequent. Namely,
\begin{equation}\label{e:A}
 \A
 =
 \sum_{\lam\in\Phi}\lam(w_0)\eta^{(\lam,a)}e_{(\lam,a)}
\end{equation}
where 
\begin{equation}\label{e:Phi}
 \Phi
 :=
 \set{\lam\in\Sigma_+: 
      \lam(w_0)\neq0}
\end{equation}
is the set of relevant roots.
For reference we also note that 
\[
 \ho=\mo\oplus\oplus_{\lam(w_0)=0}\,\textup{span}\set{Z_{(\lam,a)}}.
\]
This subalgebra is reminiscent of the $\ko$-part of the Langlands
decomposition of a parabolic subalgebra of $\gu$.
Indeed the possible
choices of $\Phi$ correspond in a one-to-one fashion 
to the possible parabolics in $\gu$.
In fact, according to Knapp~\cite[Section~VII.7]{K02} every parabolic
is specified by a set $\Gamma\subset\Sigma$ which
contains $\Sigma_+$. The correspondence is now given by setting
$\Gamma = \Sigma\setminus(-\Phi)$.
Equivalently $\Gamma$ can be defined by requiring
the identity
$-(\Gamma\cap\Sigma_-) = \Sigma_+\setminus\Phi$.
We will make use of this observation in Section~\ref{sec:rubber}.

The induced metric becomes in this notation
\[
 \mu_0
 = 
 \vv<\mathbb{I}u_1,u_2>
 + \sum_{\lam\in\Phi}\lam(w_0)^2\eta^{(\lam,a)}\otimes\eta^{(\lam,a)},
\]
which may be alternatively considered as an endomorphism 
\[
 \mu_0 
 = \mathbb{I} + \A^*\A
 = \mathbb{I} + \sum\lam(w_0)^2g_{(\lam,a)}\zeta_{(\lam,a)}
\]
of $TK=K\times\ko$.
The compressed Hamiltonian is 
\[
 \Hamc(s,u)
 =
 \by{1}{2}\vv<\mathbb{I}u,u> 
 + \by{1}{2}\sum_{\lam\in\Phi}\lam(w_0)^2g_{(\lam,a)}(s,u)^2.
\]
Furthermore,
\[
 \Omnh
 =
 \Om^K + \vv<\A,d\A>
 =
 \Om^K + \sum_{\lam\in\Phi}\lam(w_0)^2g_{(\lam,a)}d\eta^{(\lam,a)}
\]
and a formula for $d\eta^{(\lam,a)}$ is given in \eqref{e:d-eta}.

\begin{lemma}\label{lem:hor}
$\vv<\A,d\A>(\Xnh,\zeta_Y) = 0$ for all $Y\in\ho$.
\end{lemma}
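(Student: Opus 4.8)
The plan is to turn the statement into a finite‑dimensional computation in $\ko$ by first exploiting that $\vv<\A,d\A>=\sum_{\lam\in\Phi}\lam(w_0)^2 g_{(\lam,a)}\,d\eta^{(\lam,a)}$ is semi-basic with respect to $\tau\colon TK\to K$. Contracting a semi-basic form with a vector depends only on the image of that vector under $T\tau$, so the pairing in question equals $\sum_{\lam\in\Phi}\lam(w_0)^2 g_{(\lam,a)}\,d\eta^{(\lam,a)}(T\tau.\Xnh,\zeta_Y)$ evaluated at the footpoint (with the multiplicity index $a$ summed). Here $\zeta_Y$, being the internal generator, projects under $T\tau$ to the vector field $s\mapsto\Ad(s^{-1})Y$ on $K$, and by \eqref{e:Xnh} the vector $T\tau.\Xnh$ is the value of the right-invariant field with Lie-algebra value $A:=\sum_\alpha l_\alpha Y_\alpha+\sum_{(\mu,b)}g_{(\mu,b)}Z_{(\mu,b)}\in\ko$ (its coefficients frozen at the chosen point). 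Both projected vectors are thus values of right-invariant fields, exactly in the sense in which $\xi_\alpha,\zeta_{(\lam,a)}$ were set up as a right-invariant frame.

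Next I would compute $d\eta^{(\lam,a)}$ on these two vectors. Since $\eta^{(\lam,a)}$ is right invariant and takes constant values on right-invariant fields, the structure equation (equivalently \eqref{e:d-eta}) collapses to $d\eta^{(\lam,a)}(X,\zeta_Y)=\vv<Z_{(\lam,a)},[A,Y]>$, where I use that $\set{Y_\alpha,Z_{(\mu,b)}}$ is an orthonormal basis of $\ko$, so that the coframe component is just the inner product. Summing first over $a$ (recognizing the orthogonal projection $P_\lam$ onto $\spa\set{Z_{(\lam,a)}}$) and then over $\lam\in\Phi$ with weights $\lam(w_0)^2$ gives
\begin{equation*}
 \vv<\A,d\A>(\Xnh,\zeta_Y)
 =\sum_{\lam\in\Phi}\lam(w_0)^2\vv<P_\lam A,[A,Y]>
 =\vv<\ad(w_0)^2 A,[A,Y]>,
\end{equation*}
the last equality because \eqref{e:rel} shows that each $Z_{(\lam,a)}$ is an $\ad(w_0)^2$-eigenvector with eigenvalue $\lam(w_0)^2$, whence $\ad(w_0)^2|_{\ko}=\sum_{\lam\in\Phi}\lam(w_0)^2 P_\lam$ (this is also the endomorphism $\A^*\A$).

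It remains to prove the purely Lie-theoretic identity $\vv<\ad(w_0)^2 A,[A,Y]>=0$ for $A\in\ko$ and $Y\in\ho$, which is the crux. Since $Y\in\ho=Z_{\ko}(w_0)$ we have $[w_0,Y]=0$, so the Jacobi identity gives $\ad(w_0)[A,Y]=[\ad(w_0)A,Y]$. Because $w_0\in\po$, the operator $\ad(w_0)$ is symmetric for $\vv<.,.>=B_\theta$; moving one factor across yields $\vv<\ad(w_0)^2 A,[A,Y]>=\vv<\ad(w_0)A,[\ad(w_0)A,Y]>$. Now $\ad(w_0)A\in\po$, and since $Y\in\ko$ the operator $\ad(Y)$ is skew for $\vv<.,.>$; therefore $\vv<\ad(w_0)A,[\ad(w_0)A,Y]>=-\vv<\ad(w_0)A,\ad(Y)\ad(w_0)A>=0$, a skew operator having vanishing quadratic form. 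This closes the argument. I expect the only real obstacle to be the bookkeeping in the middle step — correctly collapsing the double sum over $(\lam,a)$ into $\ad(w_0)^2$ — after which the symmetry of $\ad(w_0)$ together with the skew-symmetry of $\ad(Y)$ makes the cancellation immediate.
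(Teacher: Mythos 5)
Your argument is correct, and all the sign conventions check out against the paper's set-up: the reduction to right-invariant data via semi-basicness, the identity $d\eta^{(\lam,a)}(\zeta_A,\zeta_Y)=\vv<Z_{(\lam,a)},[A,Y]>$ (consistent with $\zeta_{[X,Y]}=-[\zeta_X,\zeta_Y]$ and \eqref{e:d-eta}), the collapse of the weighted sum of projections into $\ad(w_0)^2|_{\ko}$ via \eqref{e:rel}, and the final cancellation using that $\ad(w_0)$ is $B_\theta$-symmetric (as $w_0\in\po$), commutes past $\ad(Y)$ for $Y\in\ho=Z_{\ko}(w_0)$ by Jacobi, and that $\ad(Y)$ is $B_\theta$-skew (as $Y\in\ko$). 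What you have written out is precisely the ``direct calculation using the above formula'' that the paper's proof mentions in one sentence and does not perform. The argument the paper actually displays is different and essentially computation-free: since $i(\Xnh)\Omnh=d\Hamc$ and $\Hamc$ is $H$-invariant, $\Omnh(\Xnh,\zeta_Y)=d\Hamc(\zeta_Y)=0$; since $J_H$ is conserved by the non-holonomic Noether theorem and is the standard \momap for $\Om^K$, also $\Om^K(\Xnh,\zeta_Y)=d\vv<J_H,Y>(\Xnh)=0$; subtracting gives the lemma. The trade-off is clear: the paper's route is shorter but presupposes the conservation of $J_H$ and the invariance of $\Hamc$, whereas your route is self-contained and isolates the purely Lie-theoretic identity $\vv<\ad(w_0)^2A,[A,Y]>=0$, which is of the same flavour as the $\ad(w_0)^2$-manipulations that reappear in the truncation argument of Theorem~\ref{thm:trunc}.
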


\begin{proof}
This follows either form direct calculation using the above
formula. Alternatively one can use that $\Hamc$ is $H$-invariant and
that $J_H$ is a preserved quantity. Thus $\Omnh(\Xnh,\zeta_Y) = 0 =
-\Om^K(\Xnh,\zeta_Y)$. 
\end{proof}

The structure constants are of course defined by
$c^{\alpha}_{(\lam,a)(\mu,b)} =
\vv<Y_{\alpha},[Z_{(\lam,a)},Z_{(\mu,b)}]>$ etc.

\begin{lemma}\label{lem:lam-mu}
Let $\lam,\mu,\nu\in\Sigma_+$ and $1\le\alpha\le\dim\mo$.
\begin{enumerate}[\up (1)]
\item
If
$c^{\alpha}_{(\lam,a)(\mu,b)}\neq 0$ then
$\lam=\mu$. 
\item
If $c_{(\mu,b)(\nu,c)}^{(\lam,a)}\neq0$ then $\lam=\pm\mu\pm\nu$.
\end{enumerate}
\end{lemma}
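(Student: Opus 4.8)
The plan is to read everything off from the restricted root space decomposition $\gu=\gu_0\oplus\bigoplus_{\lam\in\Sigma}\gu_\lam$ together with two structural facts. The first is a reinterpretation of the relations \eqref{e:rel}: they say that the two-dimensional space $\spa\set{Z_{(\lam,a)},e_{(\lam,a)}}$ is $\ad(\ao)$-invariant with eigenvalues $\pm\lam(w)$, the eigenvectors being $Z_{(\lam,a)}\pm e_{(\lam,a)}\in\gu_{\pm\lam}$, so that
\[
 Z_{(\lam,a)} = \by{1}{2}\big((Z_{(\lam,a)}+e_{(\lam,a)})+(Z_{(\lam,a)}-e_{(\lam,a)})\big)\in\gu_\lam\oplus\gu_{-\lam}
\]
for every $\lam\in\Sigma_+$. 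The second is an orthogonality statement: since $\theta\gu_\lam=\gu_{-\lam}$ and $B(\gu_\lam,\gu_\mu)=0$ unless $\lam+\mu=0$, the inner product $\vv<.,.>=B_\theta=-B(.,\theta.)$ makes distinct restricted root spaces orthogonal. In particular $\vv<\gu_\nu,\gu_0>=0$ for $\nu\neq0$, and $Y_\alpha\in\mo\subset\gu_0$ is orthogonal to $\ao$ because $\mo\subset\ko$ and $\ao\subset\po$ are $B_\theta$-orthogonal.

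For (1) I would combine the first observation with $[\gu_\lam,\gu_\mu]\subseteq\gu_{\lam+\mu}$ to obtain
\[
 [Z_{(\lam,a)},Z_{(\mu,b)}]\in\bigoplus_{s,t\in\set{\pm1}}\gu_{s\lam+t\mu}.
\]
Pairing against $Y_\alpha$ and invoking orthogonality, $c^{\alpha}_{(\lam,a)(\mu,b)}=\vv<Y_\alpha,[Z_{(\lam,a)},Z_{(\mu,b)}]>$ can be nonzero only if one of the summands lands in $\gu_0$, i.e.\ if $s\lam+t\mu=0$ for some choice of signs. Since $\lam,\mu\in\Sigma_+$ are positive this forces $s=-t$ and $\lam=\mu$.

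For (2) the same bracket estimate gives $[Z_{(\mu,b)},Z_{(\nu,c)}]\in\bigoplus_{s,t\in\set{\pm1}}\gu_{s\mu+t\nu}$, while $Z_{(\lam,a)}\in\gu_\lam\oplus\gu_{-\lam}$. Orthogonality of distinct root spaces then shows that $c_{(\mu,b)(\nu,c)}^{(\lam,a)}=\vv<Z_{(\lam,a)},[Z_{(\mu,b)},Z_{(\nu,c)}]>$ vanishes unless $\pm\lam=s\mu+t\nu$ for some signs, that is, unless $\lam=\pm\mu\pm\nu$.

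I do not expect a genuine obstacle here: the entire argument rests on the preliminary observation $Z_{(\lam,a)}\in\gu_\lam\oplus\gu_{-\lam}$ and on the $B_\theta$-orthogonality of distinct restricted root spaces, both immediate from the standard structure theory in \cite{K02}. The only point requiring a little care occurs in (1), where the $\gu_0$-component of the bracket might a priori sit in $\ao$ rather than $\mo$; this is harmless, however, since $Y_\alpha\in\mo$ is orthogonal to $\ao$, so only the $\mo$-part contributes and the conclusion $\lam=\mu$ is unaffected.
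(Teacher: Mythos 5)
Your proof is correct and follows essentially the same route as the paper's: the paper likewise writes $Z_{(\lam,a)} = -X_{-\lam}^a - \theta X_{-\lam}^a \in \gu_{-\lam}\oplus\gu_{\lam}$ and then appeals to the bracket relation $[\gu_\lam,\gu_\mu]\subseteq\gu_{\lam+\mu}$ together with $Y_\alpha\in\gu_0\cap\ko$. You have simply made explicit the $B_\theta$-orthogonality of distinct restricted root spaces that the paper leaves implicit, and your side remark about the $\ao$-component is sound (it is also handled by noting $[\ko,\ko]\subseteq\ko$, so the $\gu_0$-part of the bracket already lies in $\mo$).
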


\begin{proof}
To see this one notices that the $Z_{(\lam,a)}$ can be written as 
$Z_{(\lam,a)} = -X_{-\lam}^a - \theta X_{-\lam}^a\in\ko$ for a suitably
normalized orthogonal basis $X_{\lam}^a$ of $\gu$ consisting of root
vectors. (Recall that $\theta$ denotes the Cartan involution.)
The assertions now follow directly from the properties of the the root system
with respect to the action of the Lie bracket together with the fact
that $Y_{\alpha}\in\mo=\gu_0\cap\ko$. 
\end{proof}

Taking into account the change of sign in the map $\zeta:
\ko\to\X(K)$, $[X,Y]\mapsto\zeta_{[X,Y]}=-[\zeta_X,\zeta_Y]$ we obtain
the formulas
\begin{align}
 d\rho^{\alpha}
 &= 
 \by{1}{2}\sum c_{\beta\gamma}^{\alpha}\rho^{\beta}\wedge\rho^{\gamma}
 + \by{1}{2} \sum c_{(\lam,a)(\mu,b)}^{\alpha}\eta^{(\lam,a)}\wedge\eta^{(\mu,b)},\notag\\
 d\eta^{(\lam,a)}
 &=
 \sum
  c^{(\lam,a)}_{\beta(\lam,b)}\rho^{\beta}\wedge\eta^{(\lam,b)}
 + \by{1}{2}
  \sum c_{(\mu,b)(\nu,c)}^{(\lam,a)}\eta^{(\mu,b)}\wedge\eta^{(\nu,c)}.\label{e:d-eta}
\end{align}


\subsection{The preserved measure}

The $n$-dimensional Chaplygin ball problem has a preserved measure which was 
found by Fedorov and Kozlov~\cite{FK95}.
We consider the Chaplygin system $(TK,\Omnh,\Hamc)$ introduced above 
and show that the existence of a preserved
measure continues to hold.

Let $d=\dim K$ and
$g := \det\mu_0$ where we view $\mu_0$ as a function
$K\to\textup{End}(\ko)$. 
Consider the volume form 
\[
 \vol
 =
 \vol(\mu_0\times\vv<.,.>)
 =
 \by{1}{d!}g\Om^d
\]
on $TK=K\times\ko$.

\begin{lemma}\label{lem:div}
Let $f: K\to\R_{>0}$.
Then 
\[
 L_{X_{\textup{nh}}}(f(\Om^K)^d) 
 =
 d!L_{X_{\textup{nh}}}(fg^{-\frac{1}{2}}\vol)
 =
 0
 \iff
 d(\log f)\Xnh  
 =
 -\sum\dd{p_i}{}\vv<J,K>(\Xnh,\dd{q^i}{})
\]
where $(q^i,p_i)$ are canonical coordinates on $TK$.
\end{lemma}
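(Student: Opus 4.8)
The plan is to turn the invariant-measure statement into a pointwise divergence identity on $T^*K=TK$ and then to evaluate that divergence in canonical coordinates, using only that the non-closed term $\Lam:=\Omnh-\Om^K=\vv<\A,d\A>$ (written $\vv<J,K>$ in the statement) is semi-basic with respect to $TK\to K$. Recall from Section~\ref{sec:chap-based-on-lie} that $\Lam$ is semi-basic and linear in the fibres, so that in canonical Darboux coordinates $(q^i,p_i)$ it involves only the products $dq^i\wedge dq^j$ and carries no $dp$. The first equality in the statement is the least substantial point: since $\Lam$ is semi-basic it leaves the top exterior power unchanged, whence $(\Omnh)^d=(\Om^K)^d$, and the identification of $f(\Om^K)^d$ with $d!\,fg^{-\frac12}\vol$ is then immediate from the definition of $\vol$. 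It therefore suffices to analyse $L_{\Xnh}\big(f(\Om^K)^d\big)$.

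Since $(\Om^K)^d$ is a volume form on $TK$, I would define $\textup{div}\,\Xnh$ by $L_{\Xnh}(\Om^K)^d=(\textup{div}\,\Xnh)(\Om^K)^d$ and apply the Leibniz rule, which gives
\begin{equation*}
 L_{\Xnh}\big(f(\Om^K)^d\big)
 =
 f\,\big(d(\log f)\Xnh+\textup{div}\,\Xnh\big)\,(\Om^K)^d .
\end{equation*}
As $f>0$ and $(\Om^K)^d$ is nowhere zero, the left-hand side vanishes precisely when $d(\log f)\Xnh=-\,\textup{div}\,\Xnh$. Everything thus reduces to showing that $\textup{div}\,\Xnh$ equals the $p$-divergence appearing on the right-hand side of the Lemma.

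To compute $\textup{div}\,\Xnh$ I would work in the coordinates above. Writing $\Xnh=\sum_i\big(A^i\dd{q^i}{}+B_i\dd{p_i}{}\big)$, the divergence with respect to $(\Om^K)^d$ is $\sum_i\big(\dd{q^i}{A^i}+\dd{p_i}{B_i}\big)$. The defining relation $i_{\Xnh}\Omnh=d\Hamc$ becomes $i_{\Xnh}\Om^K=d\Hamc-i_{\Xnh}\Lam$; setting $\beta:=i_{\Xnh}\Lam=\sum_i(\beta^q_i\,dq^i+\beta^p_i\,dp_i)$ and matching coefficients yields $A^i=\dd{p_i}{\Hamc}-\beta^p_i$ and $B_i=-\dd{q^i}{\Hamc}+\beta^q_i$. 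The purely Hamiltonian part is divergence-free by Liouville's theorem, so
\begin{equation*}
 \textup{div}\,\Xnh
 =
 \sum_i\big(-\dd{q^i}{\beta^p_i}+\dd{p_i}{\beta^q_i}\big).
\end{equation*}
Here the semi-basicity of $\Lam$ is decisive: because $\Lam$ carries no $dp$, its contraction $\beta=i_{\Xnh}\Lam$ is again semi-basic, that is $\beta^p_i=0$ for every $i$, while $\beta^q_i=\beta(\dd{q^i}{})=\Lam(\Xnh,\dd{q^i}{})$. The first sum therefore drops out and
\begin{equation*}
 \textup{div}\,\Xnh=\sum_i\dd{p_i}{}\,\Lam(\Xnh,\dd{q^i}{})=\sum_i\dd{p_i}{}\,\vv<J,K>(\Xnh,\dd{q^i}{}),
\end{equation*}
which combined with the reduction of the previous paragraph gives the asserted equivalence.

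The only real difficulty is bookkeeping rather than conceptual: one must check that the non-Hamiltonian correction induced by $\Lam$ contributes solely through the $p$-derivatives, and this is exactly what the semi-basicity of $\vv<\A,d\A>$ — established in Section~\ref{sec:chap-based-on-lie} — guarantees. Note that fibre-linearity of $\Lam$ is not needed for the present identity, only semi-basicity; it becomes relevant when one later exhibits a concrete conformal factor, as in Proposition~\ref{thm:p-meas}. In particular, neither Libermann's formula \eqref{E:dsigma} nor the effective three-form $\psi$ enters here, which is what makes this Lemma more elementary than Theorem~\ref{thm:mult}.
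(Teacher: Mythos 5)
Your argument is correct and is essentially the paper's own proof: both apply the Leibniz rule to reduce the statement to a divergence identity and then compute the coordinate divergence of $\Xnh$ from the equations of motion, with the Hamiltonian part cancelling by equality of mixed partials and the semi-basicity of $\vv<\A,d\A>$ ensuring that only the $\sum\dd{p_i}{}\vv<J,K>(\Xnh,\dd{q^i}{})$ term survives. The only (cosmetic) difference is that you take $(\Om^K)^d$ directly as the reference volume, whereas the paper routes through the Riemannian volume $\vol$ and the $g^{\pm\frac{1}{2}}$ factors, which then cancel.
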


\begin{proof}
$L_{X_{\textup{nh}}}(fg^{-\frac{1}{2}}\vol)
 = d(fg^{-\frac{1}{2}}).\Xnh\vol +
 fg^{-\frac{1}{2}}\d_{X_{\textup{nh}}}\vol$.
Thus $f$ is a preserved density corresponding to the volume $(\Om^K)^d=\Omnh^d$ iff 
\[
 d(\log f).\Xnh =
 -\d_{X_{\textup{nh}}} + \by{1}{2}d(\log g).\Xnh. 
\]
Now,
\[
 \d_{X_{\textup{nh}}}
 =
 \sum\big(
  \dd{q^i}{}(\dd{p^i}{\mathcal{H}_{\textup{c}}}
   +
   \dd{p_i}{}(-\dd{q^i}{\mathcal{H}_{\textup{c}}}+\vv<J,K>(\Xnh,\dd{q^i}{}))
  \big)
  + \by{1}{2}d(\log g).\Xnh
\]
where we use the general formula for the divergence and, of course,
the equations of motion of the almost Hamiltonian system. 
\end{proof}

By \eqref{e:Xnh} we can identify 
$d(\log f)\Xnh = \tau^*d(\log f)(\Xnh)$ with the function
$TK\to\R$ that corresponds to the one-form $d(\log f)$ on $K$.
In particular, $f$ is unique up to multiplication by positive
constants.
We will use the notation 
\[ 
 f := \by{1}{\sqrt{g}}
\]
and refer to this (after Proposition~\ref{thm:p-meas}) 
as the preserved density of the system. 
When $G=\SO(n,1)$ and we are dealing with the $n$-dimensional
Chaplygin ball then $f$ coincides with the density found by
\cite{FK95}. 
Using the rule for the differential of the determinant,
$\zeta_{(\lam,a)}\det\mu_0 = \det(\mu_0)\textup{Tr}(\mu_0^{-1}\zeta_{(\lam,a)}\mu_0)$,
one obtains
\begin{equation}\label{e:df}
 d(\log f).\zeta_{(\lam,a)}
 =
 -\sum_{(\mu,b)}\mu(w_0)^2\vv<\mu_0^{-1}[\zeta_{(\lam,a)},\zeta_{(\mu,b)}],\zeta_{(\mu,b)}>
\end{equation}
where the notation is as in Section~\ref{sec:description}.

\begin{proposition}[The preserved measure]\label{thm:p-meas}
$L_{X_{\textup{nh}}}(f(\Om^K)^d)=0$.
\end{proposition}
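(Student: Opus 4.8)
The plan is to verify the criterion supplied by Lemma~\ref{lem:div} with the specific choice $f = g^{-1/2}$. By that lemma, proving $L_{X_{\textup{nh}}}(f(\Om^K)^d)=0$ is equivalent to establishing the single identity
\[
 d(\log f)\Xnh
 =
 -\sum_i\dd{p_i}{}\vv<J,K>(\Xnh,\dd{q^i}{}),
\]
so the entire Proposition reduces to computing both sides in the structure-theoretic notation of Section~\ref{sec:description} and checking they agree. First I would evaluate the right-hand side: the curvature term $\vv<J_V\circ\textup{hl}^{\mathcal{A}},\curv_0^{\mathcal{A}}>$ is exactly $\vv<\A,d\A> = \sum_{\lam\in\Phi}\lam(w_0)^2 g_{(\lam,a)}\,d\eta^{(\lam,a)}$, and using the explicit formula \eqref{e:d-eta} for $d\eta^{(\lam,a)}$ together with \eqref{e:Xnh} for the group-tangent part of $\Xnh$, one contracts against the vertical vectors $\partial/\partial p_i$ (equivalently the $\zeta_{(\lam,a)}$ directions). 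Second, I would expand the left-hand side using \eqref{e:df}, which already expresses $d(\log f).\zeta_{(\lam,a)}$ as a sum over $(\mu,b)$ involving $\mu(w_0)^2$ and the bracket $[\zeta_{(\lam,a)},\zeta_{(\mu,b)}]$ paired through $\mu_0^{-1}$.

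The heart of the matter is therefore a bookkeeping match between these two expansions, and this is where the selection rules of Lemma~\ref{lem:lam-mu} do the real work. Because a structure constant $c^{(\lam,a)}_{\beta(\lam,b)}$ forces the two root labels to coincide and $c^{(\lam,a)}_{(\mu,b)(\nu,c)}$ forces $\lam=\pm\mu\pm\nu$, the apparently unwieldy double sums collapse: most terms vanish, and the surviving ones pair up so that the $\lam(w_0)^2$ weighting from the connection form aligns exactly with the weighting coming from the determinant derivative. I expect the cleanest route is to recognise that $\mu_0 = \mathbb{I}+\A^*\A$, so $\mu_0^{-1}$ acts as the identity up to the correction carried by the $\lam(w_0)^2$ terms, and to track the antisymmetry of the structure constants carefully.

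The main obstacle I anticipate is \emph{not} any deep idea but rather the combinatorial reorganisation of indices: one must confirm that every term produced by differentiating $\det\mu_0$ is reproduced — with the correct sign — by contracting the curvature two-form against $\Xnh$, and that the terms lying outside the relevant root set $\Phi$ cancel among themselves. Lemma~\ref{lem:hor} is a useful auxiliary sanity check here, since it already guarantees that the curvature term annihilates $\Xnh$ paired with any $\zeta_Y$ for $Y\in\ho$, eliminating the internal-symmetry directions from the computation at the outset. Once the index matching is verified term by term using Lemma~\ref{lem:lam-mu}, the identity of Lemma~\ref{lem:div} holds and the Proposition follows immediately.
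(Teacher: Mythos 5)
Your proposal follows essentially the same route as the paper: reduce the Proposition to the identity of Lemma~\ref{lem:div}, restrict to the $\zeta_{(\lam,a)}$-directions via Lemma~\ref{lem:hor}, expand the curvature term with \eqref{e:d-eta} and \eqref{e:Xnh}, and match against \eqref{e:df} using the selection rules of Lemma~\ref{lem:lam-mu} (in particular $c_{\alpha(\lam,a)}^{(\mu,b)}=c_{\alpha(\lam,a)}^{(\mu,b)}\delta_{\lam\mu}$). The only thing separating your write-up from the paper's proof is that the term-by-term index matching you defer to at the end is carried out explicitly there, and it goes through exactly as you predict.
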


\begin{proof}
Of course, we will use Lemma~\ref{lem:div}.
Choose coordinates $q^i$ with $i\in J\cup I$ around a point in $K$
such that $\dd{q^i}{}(s)=\xi_{\alpha}$ for all $i\in J$ where $i$
corresponds to $\alpha$,
and
$\dd{q^i}{}(s) = \zeta_{(\lam,a)}(s)$ for all $i\in I$ where $i$
corresponds to $(\lam,a)$. 
The conjugate momenta corresponding to $i = (\lam,a)$ are then given
by $\dd{p_i}{} = (0,\mu_0^{-1}\zeta_{(\lam,a)})$. 
The first equality in the following calculation uses
Lemma~\ref{lem:hor}.
\begin{align*}
 \sum_{i\in I\cup J}\dd{p_i}{}&\vv<J,K>(\Xnh,\dd{q^i}{})
 = 
  \sum\dd{p_{(\lam,a)}}{}\vv<J,K>(\Xnh,\dd{q^{(\lam,a)}}{})\\  
 &=
 \sum\dd{p_{(\lam,a)}}{}\mu(w_0)^2g_{(\mu,b)}d\eta^{(\mu,b)}
   (\sum(l_{\alpha}\xi_{\alpha}+g_{(\nu,c)}\zeta_{(\nu,c)}),\dd{q^{(\lam,a)}}{})\\
 &=
 -\sum\dd{p_{(\lam,a)}}{}\mu(w_0)^2g_{(\mu,b)}c_{\alpha(\lam,a)}^{(\mu,b)}l_{\alpha}
 -\sum\dd{p_{(\lam,a)}}{}\mu(w_0)^2g_{(\mu,b)}c_{(\nu,c)(\lam,a)}^{(\mu,b)}g_{(\nu,c)}\\
 &=
 -
 \sum\mu(w_0)^2 \vv<\zeta_{(\mu,b)},\mu_0^{-1}\zeta_{(\lam,a)}> l_{\alpha}c_{\alpha(\lam,a)}^{(\mu,b)}
 - 
 \sum\mu(w_0)^2
   g_{(\mu,b)}\vv<\xi_{\alpha},\mu_0^{-1}\zeta_{(\lam,a)}>c_{\alpha(\lam,a)}^{(\mu,b)}\\
 &\phantom{= .}
 -
 \sum\mu(w_0)^2\vv<\zeta_{(\mu,b)},\mu_0^{-1}\zeta_{(\lam,a)}>g_{(\nu,c)}c_{(\nu,c)(\lam,a)}^{(\mu,b)}
 -
 \sum\mu(w_0)^2g_{(\mu,b)}\vv<\zeta_{(\nu,c)},\mu_0^{-1}\zeta_{(\lam,a)}>c_{(\nu,c)(\lam,a)}^{(\mu,b)}\\
 &= 
 \sum\mu(w_0)^2 g_{(\mu,b)}\vv<[\zeta_{(\mu,b)},\zeta_{(\lam,a)}]^{\xi},\mu_0^{-1}\zeta_{(\lam,a)}>
 +
 \sum\mu(w_0)^2
   g_{(\nu,c)}\vv<\zeta_{(\mu,b)},\mu_0^{-1}[\zeta_{(\nu,c)},\zeta_{(\mu,b)}]^{\zeta}>\\
 &=
 \sum\mu(w_0)^2g_{(\lam,a)}\vv<[\zeta_{(\lam,a)},\zeta_{(\mu,b)}],\mu_0^{-1}\zeta_{(\mu,b)}>
 =
 -d(\log f)\Xnh.
\end{align*}
where we have used that $c_{\alpha(\lam,a)}^{(\mu,b)} =
c_{\alpha(\lam,a)}^{(\mu,b)}\delta_{\lam\mu}$. Further, $(\_)^{\xi}$,
$(\_)^{\zeta}$ denote the projections onto the subspaces spanned by
$\xi_{\alpha}$, $\zeta_{(\lam,a)}$ respectively. Finally note that
$f$ is a pull-back of a function on the base $K$ and we have made use
of some formulas from Section~\ref{sec:description}.
\end{proof}

\textbf{Remark.}
When $\D$ is mechanical, that is orthogonal to the vertical bundle via
$\mu$, then we know that compression equals symplectic reduction at
$0$. 
(This case can be realized by setting $w_0=0$.)
Thus $\Xnh$ is the reduced Hamiltonian vector field and as such
it preserves $(\Om^K)^d$. This is consistent with the above since, now,
$J=0$ whence
$\dd{p_i}{}\vv<J,K>(\Xnh,\dd{q^i}{}) = 0$
and thus $\d_{\mu_0}\Xnh = \by{1}{2}d(\log g)\Xnh$. This can be used
as a roundabout way to reach the obvious conclusion $f=1$.


\subsection{Truncation}\label{sec:trunc}

The system $(TK,\Omnh,\Hamc)$ is $H$-invariant and has a preserved
quantity which is just the standard \momap $J_H: TK\to\ho^*$. Thus it
is natural to ask whether this set of data can be reduced to
$J_H^{-1}(\orb)/H\cong J_H^{-1}(\alpha)/H_{\alpha}$ where $\orb$ is an
$\Ad^*(H)$-orbit through $\alpha\in\ho^*$ and $H_{\alpha}$ is the
stabilizer of $\alpha$ in the group. The answer to this question
is negative: the \momap equation 
\[ 
 i(\zeta_Y)\Omnh = d\vv<J_H,Y>
\]
with $Y\in \ho$ is \emph{not} satisfied in general. Thus the restriction
of $\Omnh$ to $J_H^{-1}(\alpha)$ is not horizontal in general whence
it cannot induce a form on the reduced space. The situations here is
of course identical with that of \cite{HG09}. 
Thus by \cite[Theorem~3.3]{HG09} we also know that there is a
solution: the form $\vv<J,K>$ is not optimal for describing the
system; it sees vertical directions that are inessential
(Lemma~\ref{lem:hor}) whence it needs to be replaced by an entity
which is horizontal.

Let 
\begin{align}\label{e:Lam}
 \Lam\notag
 :=
 &-
 \by{1}{2}\sum\lam(w_0)^2c^{\alpha}_{(\lam,a)(\lam,b)}l_{\alpha}\eta^{(\lam,a)}\wedge\eta^{(\lam,b)}
 -
 \by{1}{2}\sum_{\lam\notin\Phi,\mu,\nu\in\Phi}
   \mu(w_0)^2c_{(\mu,b)(\nu,c)}^{(\lam,a)}g_{(\lam,a)}\eta^{(\mu,b)}\wedge\eta^{(\nu,c)}\\
 &+
 \by{1}{2}\sum_{\mu,\nu\in\Phi}\lam(w_0)^2c_{(\mu,b)(\nu,c)}^{(\lam,a)}g_{(\lam,a)}\eta^{(\mu,b)}\wedge\eta^{(\nu,c)}.
\end{align}
Notice that the coefficients of the second summand of $\Lam$ are
skew-symmetric: when $c^{(\lam,a)}_{(\mu,b)(\nu,c)}\neq0$ with
$\lam\notin\Phi$ and $\mu,\nu\in\Phi$ then $\mu(w_0)^2=\nu(w_0)^2$ by
Lemma~\ref{lem:lam-mu}. 
Of course, one makes a choice here: in principle 
one could add to $\Lam$ any $\tau$-semi-basic
$H$-basic two-from which vanishes upon contraction with $\Xnh$.
However, in the proof of Theorem~\ref{thm:ham-at-0} we will see that
this choice for $\Lam$ seems to be preferred by the problem at hand.

The following theorem generalizes \cite[Theorem~4.1]{HG09}.

\begin{theorem}[Truncation]\label{thm:trunc}
The system $(TK,\wt{\Om},\Hamc)$ where 
\[
 \wt{\Om} := \Om^K+\Lam
\]
has the following properties.
\begin{enumerate}[\up (1)]
\item
$\wt{\Om}$ is non-degenerate and $H$-basic.
\item
$i(\Xnh)\wt{\Om} = d\Hamc$.
\item
$i(\zeta_Y)\wt{\Om} = d\vv<J_H,Y>$ for all $Y\in\ho$. 
\end{enumerate}
\end{theorem}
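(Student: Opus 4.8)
The plan is to reduce all three assertions to the single decomposition $\wt{\Om} = \Omnh + \beta$ with $\beta := \Lam - \vv<\A,d\A>$, exploiting that the compressed form $\Omnh = \Om^K + \vv<\A,d\A>$ already satisfies $i(\Xnh)\Omnh = d\Hamc$ and that every term in sight is semi-basic with respect to $\tau\colon TK \to K$ and linear in the fibers. I would dispatch non-degeneracy and the $H$-properties first, since these are essentially formal, and keep the dynamical identity (2) for last, as it carries the real content. For non-degeneracy in (1): by inspection of \eqref{e:Lam}, $\Lam$ is a sum of products of the fiber-linear functions $l_{\alpha}, g_{(\lam,a)}$ with the base two-forms $\eta^{(\mu,b)}\wedge\eta^{(\nu,c)}$, hence semi-basic with no vertical (fiber-direction) component. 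Writing $\wt{\Om}$ in Darboux-type coordinates adapted to $\Om^K$, the canonical $dq\wedge dp$ pairing is untouched by $\Lam$, so the matrix of $\wt{\Om}$ stays invertible; this is the standard fact that a semi-basic perturbation of a canonical form remains symplectic.

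Next I would treat the $H$-properties and deduce (3). The frame $\xi_{\alpha},\zeta_{(\lam,a)}$, the coframe $\eta^{(\lam,a)},\rho^{\alpha}$, and the structure constants are built from the $\Ad$-invariant metric and a right-invariant framing, so $\Lam$ is $H$-invariant; together with the $H$-invariance of the canonical $\Om^K$ this makes $\wt{\Om}$ $H$-invariant. The assertion that $\wt{\Om}$ is ``$H$-basic'' is, as in the setup of Theorem~\ref{thm:mult}, the statement that the perturbation $\Lam$ is $H$-basic, i.e.\ $H$-invariant and horizontal; the latter, $i(\zeta_Y)\Lam = 0$ for $Y\in\ho$, is exactly where the precise shape of \eqref{e:Lam} is used. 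Contracting $\Lam$ with the internal generator $\zeta_Y$ (recall $\zeta_Y(s) = \Ad(s^{-1})Y$ and $\ho = \mo \oplus \bigoplus_{\lam(w_0)=0}\spa\set{Z_{(\lam,a)}}$) and invoking the selection rules of Lemma~\ref{lem:lam-mu}, each summand vanishes: the partition of the root contributions into the $\mu,\nu\in\Phi$ piece and the $\lam\notin\Phi$ piece is arranged precisely so that these contractions cancel. Granting $i(\zeta_Y)\Lam = 0$, we get $i(\zeta_Y)\wt{\Om} = i(\zeta_Y)\Om^K = d\vv<J_H,Y>$, the last equality being the defining property of the cotangent-lift momentum map $J_H$ for the canonical form, which is (3).

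It remains to prove (2). Since $i(\Xnh)\Omnh = d\Hamc$, it suffices to show $i(\Xnh)\beta = 0$, that is $i(\Xnh)\Lam = i(\Xnh)\vv<\A,d\A>$. Both two-forms are semi-basic, so only the group-tangent part $T\tau.\Xnh = \sum l_{\alpha}\xi_{\alpha} + \sum g_{(\lam,a)}\zeta_{(\lam,a)}$ from \eqref{e:Xnh} enters the contraction. I would expand $\vv<\A,d\A> = \sum_{\lam\in\Phi}\lam(w_0)^2 g_{(\lam,a)}\,d\eta^{(\lam,a)}$ using \eqref{e:d-eta}, contract with $\Xnh$, do the same for $\Lam$, and match term by term, using Lemma~\ref{lem:lam-mu} (so $c^{\alpha}_{(\lam,a)(\mu,b)}$ forces $\lam=\mu$ and $c^{(\lam,a)}_{(\mu,b)(\nu,c)}$ forces $\lam=\pm\mu\pm\nu$) together with the skew-symmetry observed after \eqref{e:Lam}, namely $\mu(w_0)^2 = \nu(w_0)^2$ whenever $c^{(\lam,a)}_{(\mu,b)(\nu,c)}\neq 0$ with $\lam\notin\Phi$.

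\textbf{Main obstacle.} The genuinely delicate step is the bookkeeping in (2). The $\lam\notin\Phi$ summand of $\Lam$ has no counterpart in $\vv<\A,d\A>$, so one must check that its contraction against $\Xnh$ cancels against the contraction of the $\rho^{\beta}\wedge\eta^{(\lam,b)}$ terms arising from the first term in the expansion of $d\eta^{(\lam,a)}$ in \eqref{e:d-eta}; tracking these cancellations through the root arithmetic and the $\ad$-invariance and skew-symmetry of the structure constants is where the calculation is tight. Lemma~\ref{lem:hor}, which gives $\vv<\A,d\A>(\Xnh,\zeta_Y) = 0$, serves as a consistency check that the vertical directions have been correctly accounted for.
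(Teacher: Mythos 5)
Your overall architecture is the same as the paper's: write $\wt{\Om}=\Omnh+(\Lam-\vv<\A,d\A>)$, get non-degeneracy for free from semi-basicness, establish that $\Lam$ is $H$-basic, and reduce (2) to the identity $i(\Xnh)\Lam=i(\Xnh)\vv<\A,d\A>$, which the paper verifies by contracting against $\zeta_{(\nu,c)}$, $\nu\in\Phi$ (contractions with $\ho$-directions and with fiber directions vanish for both forms, by horizontality resp.\ Lemma~\ref{lem:hor} resp.\ semi-basicness). So the skeleton is right, and your ``main obstacle'' paragraph correctly locates where the work is -- though that computation is the actual content of (2) and is left unexecuted in your proposal.

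There is, however, one genuine gap, and it sits exactly where you declare the step ``essentially formal'': the $H$-invariance of $\Lam$. Right-invariance of the frame $\xi_\alpha,\zeta_{(\lam,a)}$ does \emph{not} imply invariance under the lifted left $H$-action, because $l_h$ acts on the right-invariant coframe through $\Ad(h)$, mixing the $\rho^\alpha,\eta^{(\lam,a)}$ among themselves; a combination such as \eqref{e:Lam}, whose coefficients carry the non-$\Ad(K)$-invariant weights $\lam(w_0)^2,\mu(w_0)^2$, is therefore not automatically invariant. The invariance holds precisely because $h\in H=Z_K(w_0)$ forces $\Ad(h)$ to commute with $\ad(w_0)^2$ and hence to preserve its eigenspaces and eigenvalues; the paper makes this visible by rewriting the three summands of $\Lam$ intrinsically as $\vv<[\ad(w_0)^2su_1',su_2']^{\ho},su>$ and $\vv<[su_1',su_2']^{\ho^\bot},\ad(w_0)^2su>$, which are manifestly $Z_K(w_0)$-invariant. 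Conversely, you have over-engineered the horizontality: $i(\zeta_Y)\Lam=0$ for $Y\in\ho$ needs no selection rules from Lemma~\ref{lem:lam-mu} and no cancellation between the $\lam\notin\Phi$ and $\mu,\nu\in\Phi$ pieces -- every wedge factor $\eta^{(\mu,b)}$ occurring in \eqref{e:Lam} has $\mu\in\Phi$ (the first summand has coefficient $\lam(w_0)^2$, which kills $\lam\notin\Phi$), so each annihilates $\zeta_Y$ term by term. The root arithmetic you invoke there is what is actually needed for the invariance argument and for matching $i(\Xnh)\Lam$ with $i(\Xnh)\vv<\A,d\A>$ in (2).
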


\begin{proof}
Non-degeneracy is clear.
Observe that
\begin{align*}
 \Big(
 &\by{1}{2}\sum\lam(w_0)^2c^{\alpha}_{(\lam,a)(\lam,b)}l_{\alpha}\eta^{(\lam,a)}\wedge\eta^{(\lam,b)}
 +
 \by{1}{2}\sum_{\lam\notin\Phi,\mu,\nu\in\Phi}
    \mu(w_0)^2c_{(\mu,b)(\nu,c)}^{(\lam,a)}g_{(\lam,a)}\eta^{(\mu,b)}\wedge\eta^{(\nu,c)}
 \Big)_{(s,u)}(u_1',u_2')\\
 &=
 \vv<[\ad(w_0)^2s.u_1',s.u_2']^{\mathfrak{h}},s.u>
\end{align*}
where $(\_)^{\mathfrak{h}}$ denotes projection onto $\ho$. Clearly
this is $H$-invariant since, by definition, $H$ commutes with
$\ad(w_0)$.  
On the other hand,
\[
 \Big(
 \by{1}{2}\sum_{\mu,\nu\in\Phi}\lam(w_0)^2c_{(\mu,b)(\nu,c)}^{(\lam,a)}g_{(\lam,a)}\eta^{(\mu,b)}\wedge\eta^{(\nu,c)}
 \Big)_{(s,u)}(u_1',u_2')
 =
 \vv<[su_1',su_2']^{\mathfrak{h}^{\bot}},\ad(w_0)^2su>
\]
which is also $H$-independent. Thus $\Lam$ is $H$-invariant. Obviously
$\Lam$ is also $H$-horizontal since the $\eta^{(\mu,b)}$ for
$\mu\in\Phi$ are horizontal by construction.
To see that $\wt{\Om}$ produces the right dynamics note simply that 
\begin{align*}
 \vv<\A,d\A>(\Xnh,\zeta_{(\nu,c)})
 =
 {}&\sum \mu(w_0)^2g_{(\mu,b)}l_{\alpha}c^{(\mu,b)}_{\alpha(\nu,c)}\delta_{\mu,\nu}
 + 
 \sum
 \lam(w_0)^2g_{(\lam,a)}g_{(\mu,b)}c^{(\lam,a)}_{(\mu,b)(\nu,c)}\\
 =
 {}&-\sum
   \mu(w_0)^2g_{(\mu,b)}l_{\alpha}c^{\alpha}_{(\mu,b)(\nu,c)}\delta_{\mu,\nu}
 -\sum_{\mu\notin\Phi}
   \lam(w_0)^2g_{(\lam,a)}g_{(\mu,b)}c^{(\mu,b)}_{(\lam,a)(\nu,c)}\\
 &+ 
 \sum_{\mu\in\Phi}
 \lam(w_0)^2g_{(\lam,a)}g_{(\mu,b)}c^{(\lam,a)}_{(\mu,b)(\nu,c)}\\
 =
 {}&\Lam(\Xnh,\zeta_{(\nu,c)})
\end{align*}
for all $\nu\in\Phi$.
Finally, we can use the \momap equation with respect to $\Om^K$ and
horizontality of $\Lam$ to obtain the \momap equation for $\wt{\Om}$. 
\end{proof}

Thus one can pass to the description $(TK,\wt{\Om},\Hamc)$ of the system
and do (almost) Hamiltonian reduction with respect to the symmetry
group $H$ and the \momap $J_H$. Using the mechanical connection
associated to $\mu_0$ the reduced space can be realized 
as a symplectic fiber bundle
over $T^*(K/H)$ with fiber a coadjoint orbit $\mathcal{O}\subset\ho^*$
whence Theorem~\ref{thm:mult} is applicable.


\subsection{Cases of Hamiltonization for multidimensional systems}\label{sec:ham}

In this setting multidimensional means that the dimension of the
ultimate reduced configuration space $K/H$ is greater than $2$. 

By
Theorem~\ref{thm:trunc} we regard the compressed system as being
described by the almost Hamiltonian system $(TK,\wt{\Om},\Hamc)$ and
we recall that we identify $TK=T^*K$ via the induced metric
$\mu_0$. According to Theorem~\ref{thm:p-meas} this system admits a preserved
measure: 
$L_{X_{\textup{nh}}}(f\Om_K^d)=0$ where $d=\dim K$ and
\[
 f = (\det\mu_0)^{-\frac{1}{2}}.
\]
(From Lemma~\ref{lem:div} it is not hard to see that $f$ factors also
to a density on $T^*(K/H) = J_H^{-1}(0)/H$.)
Let
$\iota:
J_H^{-1}(\alpha)\hookto TK$,
$\alpha\in\ho^*$, 
$\pi: J_H^{-1}(\alpha)\toto H_H^{-1}(\alpha)/H_{\alpha}$
where $H_{\alpha}$ is the isotropy subgroup of $\alpha$ in $H$,
and 
\[
 F := f^{\frac{1}{m-1}}
\]
with 
$
 m = \by{1}{2}\dim J_H^{-1}(\alpha)/H_{\alpha}
$. 
Then
the reduced almost symplectic form $\sigma$ is characterized by the
equation $\pi^*\sigma = \iota^*\wt{\Om}$. Note that
we may use the metric $\mu_0$ to identify
\begin{equation}\label{e:red-space}
 J_H^{-1}(\alpha)/H_{\alpha}
 \cong
 J_H^{-1}(\orb)/H
 \cong
 T^*(K/H)\times_{K/H}(K\times_H\orb)
\end{equation}
where $\orb$ is the $\Ad^*(H)$-orbit through $\alpha$ and
$\sigma$ is of the
form `canonical plus magnetic plus semi-basic' with the semi-basic
part linear in the fibers whence Theorem~\ref{thm:mult} is
applicable. 
Thus, up to multiplication by positive constants, the only possible
candidate 
for a conformal factor of $\sigma$ will be $F$ which we can view as a
function $K/H_{\alpha}\to\R_{>0}$. 
(Because $\delta\sigma = -(m-1)d\log F$ in this case.)
Now it is a trivial observation to note that $F$ indeed is a conformal
factor if and only if
\begin{equation}\label{e:iff-cond}
 \iota^*d\Lam = -\iota^*(d(\log F)\wedge\wt{\Om}).
\end{equation}
Analyzing this equation for $\alpha=0$ leads to the following result.

\begin{theorem}[Hamiltonization at $0$ momentum]\label{thm:ham-at-0} 
Let $m=\dim{K/H}$. 
The induced almost symplectic structure $\sigma$ on
$J_H^{-1}(0)/H\cong T^*(K/H)$ is Hamiltonizable if and only if
the metric tensor 
$\mu_0 = \mathbb{I}+\sum\lam(w_0)^2g_{(\lam,a)}\zeta_{(\lam,a)}:
\ko\to\ko$ 
satisfies
\begin{multline}\label{e:ham-at-0}
 \langle s\mu_0(s)^{-1}s^{-1}Z_{(\kappa,d)},
 [\ad(w_0)^2Z_{(\mu,b)},Z_{(\nu,c)}]^{\mathfrak{h}} - \ad(w_0)^2[Z_{(\mu,b)},Z_{(\nu,c)}]\rangle\\
 =
 \by{1}{m-1}
  \sum\vv<s\mu_0(s)^{-1}s^{-1}Z_{(\lam,a)},
     [Z_{(\mu,b)},\ad(w_0)^2Z_{(\lam,a)}]\delta_{(\nu,c),(\kappa,d)} 
     -
     [Z_{(\nu,c)},\ad(w_0)^2Z_{(\lam,a)}]\delta_{(\mu,b),(\kappa,d)}>
\end{multline}
for all $\kappa,\mu,\nu\in\Phi$. 
Here $(\_)^{\mathfrak{h}}$ denotes the projection onto $\ho$ with
respect to the $\Ad$-invariant inner product.
As usual
$\delta_{(\nu,c),(\kappa,d)}$ is $1$
if $(\nu,c)=(\kappa,d)$ and $0$ else. Moreover, if this condition is
satisfied then 
\begin{equation}\label{e:exact}
 \pi^*(F\sigma)
 =
 \iota^*(F\wt{\Om})
 =
 -\iota^*d(F\sum G_{(\lam,a)}\eta^{(\lam,a)})
 =
 -\pi^*d(F\theta^{K/H})
\end{equation}
where $\theta^{K/H}$ is the Liouville one-form on $T^*(K/H)$.
That is, $F\sigma$ is even exact. 
\end{theorem}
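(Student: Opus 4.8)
The plan is to deduce Hamiltonizability from the multiplier theorem, Theorem~\ref{thm:mult}, applied to the truncated and reduced system, and to reduce its conformality criterion at zero momentum to \eqref{e:iff-cond}. By Theorem~\ref{thm:trunc} the form $\wt{\Om}$ is $H$-basic, reproduces the dynamics, and carries $J_H$ as an honest \momap, so (almost) Hamiltonian reduction applies and, through the mechanical connection of $\mu_0$, yields \eqref{e:red-space}. In \eqref{e:sa} the coadjoint orbit $\orb=\set{0}$ is a point, so the magnetic and fibre terms vanish and $\sigma=\Om^{K/H}+\Lam_0$ on $\W=T^*(K/H)$, with $m=\dim(K/H)$ and $\pi^*\sigma=\iota^*\wt{\Om}$. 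By Proposition~\ref{thm:p-meas} and Lemma~\ref{lem:div} the density $f=(\det\mu_0)^{-1/2}$ descends to $J_H^{-1}(0)/H$; with Lemma~\ref{l:delta-sigma} this forces $F=f^{1/(m-1)}$ as the unique possible conformal factor. Writing $d(F\sigma)=F\,d\Lam_0+dF\wedge\sigma$ then shows that Hamiltonizability is equivalent to \eqref{e:iff-cond}, which upon reduction reads $d\Lam_0=-d(\log F)\wedge\sigma$.

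The key observation I would exploit next is that $\Lam_0$ is semi-basic and linear in the fibres, so $i_{\dd{p_{(\kappa,d)}}{}}\Lam_0=0$ and hence, by Cartan's formula, $i_{\dd{p_{(\kappa,d)}}{}}d\Lam_0=L_{\dd{p_{(\kappa,d)}}{}}\Lam_0$ is just the $G_{(\kappa,d)}$-coefficient of $\Lam_0$. Contracting $d\Lam_0=-d(\log F)\wedge\sigma$ with $(\dd{p_{(\kappa,d)}}{},\zeta_{(\mu,b)},\zeta_{(\nu,c)})$ for $\kappa,\mu,\nu\in\Phi$ then kills the $d(\log F)\wedge\Lam_0$ part and leaves an identity between the $G_{(\kappa,d)}$-coefficient of $\Lam_0$ and $-d(\log F)\wedge\Om^{K/H}$ on the same triple. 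Since both $\Lam_0$ and $-d(\log F)\wedge\theta^{K/H}$ are semi-basic, linear in the fibres, and vanish on the zero section, equality of all their fibre-derivatives is equivalent to the $2$-form identity $\Lam_0=-d(\log F)\wedge\theta^{K/H}$ itself, and the purely horizontal components of \eqref{e:iff-cond} are then automatic. Thus \eqref{e:iff-cond} collapses to this single $2$-form identity.

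It remains to identify that identity with \eqref{e:ham-at-0} and to read off exactness. Here I would use the intrinsic form of $\Lam$ extracted in the proof of Theorem~\ref{thm:trunc},
\[
 \Lam_{(s,u)}(u_1',u_2')
 =
 -\vv<[\ad(w_0)^2 s.u_1',s.u_2']^{\ho},s.u>
 +\vv<[s.u_1',s.u_2']^{\ho^{\bot}},\ad(w_0)^2 s.u>,
\]
together with $\ad(w_0)^2Z_{(\lam,a)}=\lam(w_0)^2Z_{(\lam,a)}$ from \eqref{e:rel} and $\ad(w_0)|\ho=0$, which lets one replace the $\ho^{\bot}$-projection in the second term by $\ad(w_0)^2[\,\cdot\,,\cdot\,]$. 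The fibre-derivative $L_{\dd{p_{(\kappa,d)}}{}}$ turns $s.u$ into $s\mu_0(s)^{-1}s^{-1}Z_{(\kappa,d)}$, producing exactly the left-hand side of \eqref{e:ham-at-0}; the right-hand side comes from $-d(\log F)\wedge\Om^{K/H}$, where the canonical pairings $\Om^{K/H}(\dd{p_{(\kappa,d)}}{},\zeta_{(\mu,b)})$ give the Kronecker deltas and $d\log F=\by{1}{m-1}d\log f$ with \eqref{e:df} supplies the remaining brackets and the factor $\by{1}{m-1}$. Finally, on $J_H^{-1}(0)$ the $\mo$- and $\set{Z_{(\lam,a)}:\lam\notin\Phi}$-components of $J_H$, namely $l_\alpha$ and $G_{(\lam,a)}$ with $\lam\notin\Phi$, vanish, so $\iota^*\theta^K=\sum_{\lam\in\Phi}G_{(\lam,a)}\eta^{(\lam,a)}$ descends to $\theta^{K/H}$; combining the $2$-form identity with $\Om^K=-d\theta^K$ gives $F\sigma=-d(F\theta^{K/H})$, which is \eqref{e:exact}.

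The main obstacle will be the bookkeeping of the identification step: cleanly extracting the fibre-derivative of the intrinsic $\Lam$ and disentangling the two structurally different contributions $[\ad(w_0)^2\,\cdot\,,\cdot\,]^{\ho}$ and $\ad(w_0)^2[\,\cdot\,,\cdot\,]$ so that they assemble into the correct two terms of the left-hand side of \eqref{e:ham-at-0}, while ensuring that the factor $\by{1}{m-1}$ and the Kronecker deltas on the right-hand side emerge with the right signs. The conceptual point I would want to pin down carefully is the collapse of the full $3$-form criterion \eqref{e:iff-cond} to the undifferentiated $2$-form identity, since it is precisely this collapse that simultaneously yields Hamiltonizability and the stronger exactness statement \eqref{e:exact}.
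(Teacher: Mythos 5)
Your proposal is correct and follows essentially the same route as the paper: fix $F=f^{1/(m-1)}$ as the only candidate conformal factor via the preserved measure and Lemma~\ref{l:delta-sigma}, reduce Hamiltonizability to \eqref{e:iff-cond}, collapse that to the fibrewise-linear two-form identity $\Lam=-d(\log F)\wedge\sum G_{(\lam,a)}\eta^{(\lam,a)}$ on $J_H^{-1}(0)$ (the paper's necessity step likewise contracts with $\dd{G_{(\kappa,d)}}{}=(0,\mu_0^{-1}\zeta_{(\kappa,d)})$), and identify both sides on pairs $(\zeta_{(\mu,b)},\zeta_{(\nu,c)})$ to obtain \eqref{e:ham-at-0} and the exactness statement \eqref{e:exact}. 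The only cosmetic difference is that you use the intrinsic expression for $\Lam$ from the proof of Theorem~\ref{thm:trunc} where the paper expands in structure constants; these are equivalent.
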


We remark that $\mathbb{I}=1$ implies that
$s\mu_0(s)s^{-1}=\mu_0(e)$. 
Notice that the condition simplifies when $|\Phi|=1$ as is the case
for the $n$-dimensional Chaplygin ball. When $\dim K/H = 2$ then the
condition is empty in agreement with the Chaplygin multiplier theorem.

\begin{proof}
Let us first prove that \eqref{e:ham-at-0} implies
\eqref{e:exact}. Since $\iota^*(F\wt{\Om}) =
\iota^*(-Fd(\sum G_{(\lam,a)}\eta^{(\lam,a)}) + F\Lam)$ it suffices to show
that $\Lam = -d(\log F)\wedge\sum G_{(\lam,a)}\eta^{(\lam,a)}$ along
$J_H^{-1}(0)$.\footnote{We view this as `compelling evidence' that the
  choice for $\Lam$ in \eqref{e:Lam} is in a sense optimal.} 
Consider an element $(s,u)\in J_H^{-1}(0)$ with $u =
\mu_0^{-1}\zeta_{(\kappa,d)}$ where $\kappa\in\Phi$. (Notice that we
sometimes drop the base point $s$ in order not to make the notation
too cumbersome.) Then with
$\mu,\nu\in\Phi$ we have
\begin{align*}
 \Lam_{(s,u)}(\zeta_{(\mu,b)},\zeta_{(\nu,c)}) 
 &= 
  -\sum_{\alpha}
    \mu(w_0)^2\delta_{\mu,\nu}c^{\alpha}_{(\mu,b)(\nu,c)}\vv<Y_{\alpha}s\mu_0^{-1}s^{-1}Z_{(\kappa,d)}>\\
  &\phantom{==}
  -\sum_{\lam\notin\Phi}
    \mu(w_0)^2c_{(\mu,b)(\nu,c)}^{(\lam,a)}\vv<Z_{(\lam,a)},s\mu_0^{-1}s^{-1}Z_{(\kappa,d)}>\\
  &\phantom{==}
  +
   \sum_{\lam\in\Phi}
    \lam(w_0)^2c_{(\mu,b)(\nu,c)}^{(\lam,a)}\vv<Z_{(\lam,a)},s\mu_0^{-1}s^{-1}Z_{(\kappa,d)}>\\
 &=
  -\vv<[\ad(w_0)^2Z_{(\mu,b)},Z_{(\nu,c)}]^{\mathfrak{m}},s\mu_0^{-1}s^{-1}Z_{(\kappa,d)}>\\
  &\phantom{==}
  -\vv<[\ad(w_0)^2Z_{(\mu,b)},Z_{(\nu,c)}]^{\mathfrak{h}\cap\mathfrak{m}^{\bot}},s\mu_0^{-1}s^{-1}Z_{(\kappa,d)}>\\
  &\phantom{==}
  +
  \vv<\ad(w_0)^2[Z_{(\mu,b)},Z_{(\nu,c)}],s\mu_0^{-1}s^{-1}Z_{(\kappa,d)}>\\
 &=
  -\vv<s\mu_0^{-1}s^{-1}Z_{(\kappa,d)},
     [\ad(w_0)^2Z_{(\mu,b)},Z_{(\nu,c)}]^{\mathfrak{h}}
     - \ad(w_0)^2[Z_{(\mu,b)},Z_{(\nu,c)}]>.
\end{align*}
As before, the superscript $(\_)^{\mathfrak{m}}$ denotes projection
onto $\mo$ with respect to the $\Ad$-invariant inner product $\vv<.,.>$.
On the other hand,
\begin{align*}
 -(d(\log F)\wedge\sum G_{(\lam,c)}\eta^{(\lam,a)})_{(s,u)}(\zeta_{(\mu,b)},\zeta_{(\nu,c)})
 &=
  \by{1}{m-1}
   \sum\lam(w_0)^2
   \vv<\mu_0^{-1}[\zeta_{(\mu,b)},\zeta_{(\lam,a)}],\zeta_{(\lam,a)}>
   \delta_{(\kappa,d),(\nu,c)}\\
  &\phantom{==}
   -
   \by{1}{m-1}
    \sum\lam(w_0)^2
    \vv<\mu_0^{-1}[\zeta_{(\nu,c)},\zeta_{(\lam,a)}],\zeta_{(\lam,a)}>
    \delta_{(\kappa,d),(\mu,b)}\\
 &=
  -\by{1}{m-1}
    \sum\vv<s\mu_0^{-1}s^{-1}Z_{(\lam,a)},
        [Z_{(\mu,b)},\ad(w_0)^2Z_{(\lam,a)}]>\delta_{(\nu,c),(\kappa,d)}\\
  &\phantom{==}
  +
   \by{1}{m-1}
    \sum\vv<s\mu_0^{-1}s^{-1}Z_{(\lam,a)},
         [Z_{(\nu,c)},\ad(w_0)^2Z_{(\lam,a)}]>\delta_{(\mu,b),(\kappa,d)}.
\end{align*}
Since the two-forms in question are semi-basic and linear in the
fibers this proves that they are equal along the $0$ level set of
$J_H$. Note also that the pull-back of the Liouville one-form on $T^*(K/H)$
equals $\iota^*\sum G_{(\lam,a)}\eta^{(\lam,a)} =
\iota^*\sum_{\lam\in\Phi}G_{(\lam,a)}\eta^{(\lam,a)}$. 
To see that the condition is also necessary one evaluates 
Equation~\eqref{e:iff-cond} on a triple
of the form
$(\zeta_{(\mu,b)},\zeta_{(\nu,c)},\dd{G_{(\kappa,d)}}{} =
(0,\mu_0^{-1}\zeta_{(\kappa,d)}))$. 
The resulting calculation is very similar to the one above.
\end{proof}


\section{Examples}\label{sec:examples}

This section contains examples of the class of non-holonomic systems
introduced in the previous section. We continue all the notation from
above, most of which has been
introduced in
Section~\ref{sec:description}. 
In particular,
$\Sigma$ will be the set of restricted roots associated to a pair
$(\gu,\ao)$ and $\Sigma_+\subset \Sigma$ a choice of positive
roots. 
Then the associated root space decomposition is 
$
 \gu
 =
 \gu_0\oplus\oplus_{\lam\in\Sigma}\gu_{\lam}
$
where 
$
 \gu_{0} = \mo\oplus\ao.
$
Moreover, we choose an orthonormal system
$
 Y_{\alpha}
$
and 
$Z_{(\lam,a)}$,
that is adapted to the decomposition $\ko=\mo\oplus\mo^{\bot}$, and an
orthonormal basis
$
 e_{(\lam,a)}
$
of $\ao^{\bot}\cap\po$.
We will in each example fix an element $w_0\in\ao$ and consider the
set $\Phi := \set{\lam\in\Sigma_+: \lam(w_0)\neq0}$.


\subsection{$\SO(n,1)$, Hamiltonization of Chaplygin's ball}
According to Section~\ref{sub:SO} the above Theorem~\ref{thm:ham-at-0}
should have some bearing on the $n$-dimensional Chaplygin ball system
with angular momentum $\alpha=0$. 
Moreover, for this system there is only $1$ positive root
(and we assume that $\lam(w_0)=1$ for this root)  whence
Condition~\eqref{e:ham-at-0} simplifies to
\[
 \vv<s\mu_0^{-1}s^{-1}Z_d,[Z_b,Z_c]>
 =
 \by{1}{m-1}
 \sum_a\vv<s\mu_0^{-1}s^{-1}Z_a,
   [Z_b,Z_a]\delta_{cd}
   - [Z_c,Z_a]\delta_{bd}>.
\]
Writing this equation in terms of the inertia tensor $\mathbb{I}$
implies that the system is Hamiltonizable at the
$T^*(K/H)=T^*(\SO(n)/\SO(n-1))$-level if and only if $\mathbb{I}$ satisfies
\begin{equation}\label{e:ham-i}
 s^{-1}Z_d
 =
 (\mathbb{I}+1)s^{-1}Z(d) + \mathbb{I}\sum_b\mathcal{M}_{b,d}(s)s^{-1}[Z_b,Z_d]
\end{equation}
for an arbitrary $d$-dependent vector $Z(d)\in\ho^{\bot}$ and
arbitrary $s$- and $b,d$-dependent numbers $\mathcal{M}_{b,d}(s)\in\R$.
We will identify $\so(n)$ with $\R^n\wedge\R^n$ and hence $Z_d =
e_d\wedge e_n$ and $[Z_b,Z_d]=e_b\wedge e_d$ where $e_1,\dots,e_n$ is
the standard basis of $\R^n$. Simultaneously we revert to writing
$\Ad(s)$ for the adjoint action of $s$ on $\so(n)$. 

Making the simplifying assumption that $\mathbb{I}$ is diagonal with
respect to the basis $Y_{\alpha},Z_a$ of $\ko=\so(n)$ and evaluating
\eqref{e:ham-i} at $s=e$ then implies that
$Z(d)=(\mathbb{I}+1)^{-1}Z_d=\var_dZ_d$ for some $\var_d>0$. 
Therefore,
\[
 \mathbb{I}e_d\wedge e_n
 =
 \by{1-\var_d}{\var_d}e_d\wedge e_n.
\]
A choice of a number $a_n>0$ then induces a prescription
\[
 \var_d \mapsto \by{1-\var_d}{a_n} = a_d,
 \quad
 a_d \mapsto \var_d = 1-a_da_n
\]
which can be taken as a motivation to define
\begin{equation}\label{e:jov-i}
 \mathbb{I}e_i\wedge e_j 
 =
 \by{a_ia_j}{1-a_ia_j}e_i\wedge e_j
 \textup{ with }
 0<a_ia_j<1
 \textup{ for }
 1\le i,j \le n.
\end{equation}
This is the inertia tensor of Jovanovic~\cite[Section~4]{Jov09}. 
Another equivalent way to write \eqref{e:ham-i}
is
\begin{equation}\label{e:ham-i2}
 \mu_0^{-1}\Ad(s^{-1})(e_d\wedge e_n)
 =
 \Ad(s^{-1})Z(d) + \sum\mathcal{M}_{b,d}(s)\Ad(s^{-1})(e_b\wedge e_d)
\end{equation}
with the same notation as above. Going through the proof of Theorem~3
of \cite{Jov09} one sees that 
\begin{align*}
 \mu_0^{-1}\Ad(s^{-1})(e_d\wedge e_n)
 &=
 \vv<s^{-1}e_n,A^{-1}s^{-1}e_n>
 \Big(
  (-As^{-1}e_d + 
        \vv<A^{-1}s^{-1}e_n,s^{-1}e_n>s^{-1}e_d)\wedge s^{-1}e_n\\
  &\phantom{==}
  +
  \sum\vv<A^{-1}s^{-1}e_n,s^{-1}e_b>s^{-1}e_b\wedge s^{-1}e_d
 \Big)
\end{align*}
where $A:=\diag{a_1,\dots,a_n}$. With 
$Z(d) =  \vv<s^{-1}e_n,A^{-1}s^{-1}e_n>
  (-As^{-1}e_d + 
        \vv<A^{-1}s^{-1}e_n,s^{-1}e_n>s^{-1}e_d)\wedge s^{-1}e_n$
and
$\mathcal{M}_{b,d}(s)
 =
 \vv<s^{-1}e_n,A^{-1}s^{-1}e_n>\vv<A^{-1}s^{-1}e_n,s^{-1}e_b>$
this clearly satisfies \eqref{e:ham-i2}. Thus the system defined by
the inertia tensor \eqref{e:jov-i} is Hamiltonizable at the
$T^*(K/H)$-level which reproduces the result of
\cite[Theorem~5]{Jov09}. 
In fact, the rescaled form is given by \eqref{e:exact} whence it is not only
symplectic but even exact.


\subsection{$\SL(n,\R)$}
Let 
$\gu = \mathfrak{sl}(n,\R)$. 
Then 
$\ko = \so(n)$,
$\po = \set{x\in\mathfrak{sl}(n,\R):x^t=x}$,
$\ao = \set{\diag{w^1,\dots,w^n}\in\mathfrak{sl}(n,\R)}$,
and $\mo=\set{0}$. Thus there are no internal symmetries when $w_0$ is
regular. Let $f_i:\mo\to\R$, $w=\diag{w^1,\dots,w^n}\mapsto w^i$ for
$1\le i\le n$. Similarly to the Cartan case the restricted root system
$\Sigma = \set{\lam_{ij} := f_i-f_j: i\neq j}$ 
associated to $(\gu,\mo)$ is of type
$A_{n-1}$. A choice of a positive system is $\Sigma_+ =
\set{\lam_{ij}: i<j}$.

Let $n=3$. According to \eqref{e:A} the constraints are determined by
the connection form 
$\A: 
 TK\to V 
 = \set{x\in\mathfrak{sl}(3,\R):x^t=x \textup{ and } x^{ii} = 0}$,
\begin{equation}\label{e:Asl}
 \A:
 (s,u)\mapsto\Ad(s)u 
 = 
 \wt{u}
 =
 \left(
  \begin{matrix}
    \wt{u}^1\\
    \wt{u}^2\\
    \wt{u}^3
  \end{matrix}
 \right)
 \mapsto
 -\ad(w_0)\wt{u}
 =
 -\left(
   \begin{matrix}
     \lam_3(w_0)\wt{u}^1\\
     \lam_1(w_0)\wt{u}^2\\
     \lam_2(w_0)\wt{u}^3
   \end{matrix}
  \right)
\end{equation}
where $\lam_1=\lam_{13}>\lam_2=\lam_{12}>\lam_3=\lam_{23}$ are the
ordered positive roots.   
Note that $\lam_2+\lam_3=\lam_1$.
The basis vectors $Z_{(\lam,a)}$, $e_{(\lam,a)}$ introduced in
Section~\ref{sec:description} can now be identified with
$Z_{\lam_1}=(0,1,0)^t$, etc., considered as an element of $\ko\cong\R^3$ and 
$e_{\lam_1}=Z_{\lam_1}=(0,1,0)^t$, etc., considered as an element of
$V\cong\R^3$. 

For generic $w_0$, $Q\cong\SO(3)\times\R^3$, and the system \eqref{e:Asl} could be viewed as a three-axial
ellipsoid with constraints moving through space.
There are no internal symmetries, $\ho=\mo=0$, in this case. Using the relation
$[Z_{\lam_1},Z_{\lam_2}]=Z_{\lam_3}$ condition~\eqref{e:ham-at-0} with
$\kappa=\lam_3$, $\mu=\lam_1$ and $\nu=\lam_2$ thus becomes
$\lam_3(w_0)^2\vv<\mu_0^{-1}Z_{\lam_3},Z_{\lam_3}> = 0$.
Since $\mu_0$ is positive definite this implies $\lam(w_0)=0$
contradicting genericity of $w_0$.
Thus this case is \emph{never} Hamiltonizable, not even for the
homogeneous case $\mathbb{I}=1$. This is in contrast with the
$n$-D Chaplygin ball system \cite[Corollary~4.3]{HG09}.

However, when $\lam_2(w_0)=0$ and $\lam_1(w_0)=\lam_3(w_0)\neq0$ then
$H=S^1$ and we
recover the $3$-D Chaplygin ball system.


\subsection{$\textup{Sp}(n,\R)$}
Let $G=\textup{Sp}(n,\R) = \set{g\in\SL(2n,\R): g^tJg=J}$ where $J$ is
the standard complex structure on $\R^{2n}$. 
Thus $\gu=\mathfrak{sp}(n,\R)$ consists of matrices of the form
\[
\left(
\begin{matrix} 
 X_1 & X_2\\
 X_3 & -X_1^t
\end{matrix}
\right)
\]
with $X_i\in\gl(n,\R)$ such that $X_2$ and $X_3$ are symmetric. The
constituents of the Cartan decomposition are 
$\ko =
\so(2n)\cap\mathfrak{sp}(n,\R)\cong\mathfrak{u}(n)$,
$K=\textup{U}(n)$, 
and 
$\po = \set{x\in\gu: x^t=x}$, and $\ao$ is the subspace of diagonal
matrices in $\po$ and $\mo=\set{0}$.

For convenience we will
restrict now to the case $n=2$. 
For $i=1,2$ define $f_i\in\ao^*$ to be the mapping 
$f_i:
\diag{w^1,w^2,-w^1,-w^2}\mapsto w^i$. Then the positive restricted
roots associated to $(\gu,\ao)$ are 
\[
 \Sigma_+ = \set{f_1-f_2,f_1+f_2,2f_1,2f_2}.
\]
Note that $\set{f_1-f_2,2f_2}$ forms a simple system. Since we are
interested in having internal symmetries we fix an element
$w_0 =
\diag{a,a,-a,-a}\in\ao$ with $a>0$. Thus $(f_1-f_2)(w_0)=0$, 
$\Phi = \set{f_1+f_2,2f_1,2f_2}$ and $\lam(w_0) = 2a$ for all
$\lam\in\Phi$.
Therefore,
\[
 \A:
 (s,u)\mapsto\Ad(s)u 
 = 
 \wt{u}
 =
 \left(
  \begin{matrix}
    \wt{u}^1\\
    \wt{u}^2\\
    \wt{u}^3\\
    \wt{u}^4
  \end{matrix}
 \right)
 \mapsto
 -\ad(w_0)\wt{u}
 =
 -2a\left(
     \begin{matrix}
       0\\
       \wt{u}^2\\
       \wt{u}^3\\
       \wt{u}^4
     \end{matrix}
    \right)
\]
Further, the configuration space is
$
 Q = K\times V \cong U(2)\times\R^3
$ 
and
$
 \ko 
 = \ho\oplus\ho^{\bot}
 = 
 \set{ 
 yZ_{f_1-f_2}: y\in\R
 }
 \oplus
 \set{
 z^{11}Z_{2f_1} + z^{12}Z_{f_1+f_2} + z^{22}Z_{2f_2}
 : z^{ij}\in\R
 }
$
where
\[
 Z_{f_1-f_2}
 =
 \left(
 \begin{matrix}
  0 & -1 &   &   \\
  1 & 0  &   &   \\
    &    & 0 & -1\\
    &    & 1 & 0
 \end{matrix}
 \right)
 \textup{ and }
 z^{11}Z_{2f_1} + z^{12}Z_{f_1+f_2} + z^{22}Z_{2f_2}
 =
 \left(
 \begin{matrix}
   &      &        & z^{11} & z^{12}\\
   &      &        & z^{12} & z^{22}\\
  -z^{11} & -z^{12} &       &       \\
  -z^{12} & -z^{22} &       &       
 \end{matrix}
 \right)
\]
Notice also that one can read off from the properties of
the root system that
$[\ho^{\bot},\ho^{\bot}]\subset\ho$ whence
the left and right
hand side of \eqref{e:ham-at-0} are both identically $0$ for the
homogeneous case $\mathbb{I}=1$. Thus the homogeneous case is
Hamiltonian ($F$ is constant) at the ultimate reduced level $T^*(U(2)/S^1)$. 

For general $n$ one can use that the root system $\Sigma(\gu,\ao)$ is of type
$C_n$ whence the positive system will be of the form $\Sigma_+ =
\set{f_i\pm f_j: 1\le i<j\le n}\cup\set{2f_i: 1\le i\le n}$ and the simple roots
are $f_i - f_j$ with $1\le i<j\le n$ and $2f_n$. A choice of $w_0$ can
now be determined by letting appropriately many simple roots vanish on
$w_0$. E.g., one can conclude just as above that choosing a non-zero $w_0$ 
in the joint kernel of $f_i - f_j$ with $1\le i<j\le n$ yields a
system which is Hamiltonian at the ultimate reduced level $T^*(K/H) = T^*(U(n)/(U(1)^{n-1}))$.


\subsection{Split $G_2$, $2-3-5$, $1/3$ and rubber rolling}\label{sec:rubber}

Let $G$ be the split real form of the the exceptional complex
semi-simple Lie group $G_2$. This group is $14$-dimensional and can be
realized as the automorphism group of the split octonions. We refer to
\cite{SV00,S06,K02} for background. 
The Cartan decomposition data are the following,
\[
 K = \textup{SU}(2)\times_{(\pm1)}\textup{SU}(2) 
     \cong\SO(4), \text{ }
 \po\cong\R^8,\text{ }
 \ao\cong\R^2,
 \textup{ and }
 \mo=\set{0}.
\]
The restricted roots are of type $G_2$ whence a positive system can be
written as
\[
 \Sigma_+
 =
 \set{\lam_1,\lam_2,\lam_1+\lam_2,\lam_1+2\lam_2,2\lam_1+3\lam_2,\lam_1+3\lam_2}
\]
with 
$\lam_1$ and $\lam_2$ simple.
We choose $w_0\in\ao$ such that $\lam_1(w_0)=0$ and
$\lam_2(w_0)\neq0$. 
Thus the set of relevant roots is
$
 \Phi
 =
 \set{\lam_2,\lam_1+\lam_2,\lam_1+2\lam_2,2\lam_1+3\lam_2,\lam_1+3\lam_2}
$
and the infinitesimal internal symmetries are 
\[
 \ho = \textup{span}\set{Z_{\lam_1}} = \R
\]
which we view as the Lie algebra of the connected component $H$ of
$Z_K(w_0)$,
\[
 H \cong S^1.
\]
According to Section~\ref{sec:chap-based-on-lie} we have
$V = \ad(w_0)\ko = \textup{span}\set{e_{\lam}: \lam\in\Phi} \cong \R^5$ and therefore
\[
 Q\cong K\times\R^5
 \textup{ and }
 Q/(\R^5\times H) = K/H
 \cong \textup{SU}(2)\times\SO(3)/S^1
 \cong \textup{SU}(2)\times S^2.
\]
We remark that $K/H\cong G/P_{w_0}$
where $P_{w_0}$ is the
parabolic subgroup of $G$ associated to the subset of simple roots $\Pi$
consisting of $\set{\lam\in\Pi: \lam(w_0)=0} = \set{\lam_1}$. 

What about Hamiltonization? Suppose $\mathbb{I}=1$ which implies that
$s\mu_0(s)s^{-1}=\mu_0(e)$ and 
$\mu_0(e)^{-1}Z_{\kappa} = (1+\kappa(w_0)^2)^{-1}Z_{\kappa}$ for all
$\kappa\in\Sigma_+$. 
Thus the left hand side of \eqref{e:ham-at-0} is non-zero for, e.g.,
$\kappa=\lam_1+\lam_2$, $\mu=\lam_1+2\lam_2$ and
$\nu=2\lam_1+3\lam_2$. Thus the system
is not Hamiltonizable at the
$T(K/H)$-level corresponding to reduction of
$(TK,\wt{\Om},\Hamc)$ 
at $0$-level set of the $J_H$-\momap. 

On the other hand we recognize $K/H$ as the double cover 
configuration space $\SO(3)\times S^2$ 
of the sphere-on-sphere-rolling system. This
system is a natural generalization of the Chaplygin ball on a table
when one forbids slipping. One can also introduce a no-twist
constraint and the resulting non-holonomic system has been shown to be
Hamiltonizable by Koiller and Ehlers~\cite{EK07}. Moreover, it seems to be known since
Cartan that $G_2$ is related to this no-twist no-slip sphere-on-sphere
system. 
Therefore, 
one might expect some relation between this system and the one 
defined by 
$(TK,\wt{\Om},\Hamc)$
even though 
the non-Hamiltonizability of the latter is apparently an obstruction to
any such relation.

Recall from Theorem~\ref{thm:trunc} that $\wt{\Om}=\Om^K+\Lam$. In
order to stand a chance at obtaining a Hamiltonizable system we
consider the set
$
 \set{(s,u)\in TK: i(\Xnh)\Lam_{(s,u)}=0}.
$
By \eqref{e:Lam} we have 
\[
 i(\Xnh)\Lam(\zeta_{\nu}) 
 =  
  -\sum_{\mu}\mu(w_0)^2c^{\lam_1}_{\mu\nu}g_{\mu}g_{\lam_1} 
  +
  \sum_{\lam,\mu\in\Phi}\mu(w_0)^2c^{\lam}_{\mu\nu}g_{\lam}g_{\mu}.
\] 
Setting $\nu=\lam_1+2\lam_2$ the possibilities for $\set{\lam,\mu}$ 
are 
$\set{\lam_2,\lam_1+\lam_2}$ and $\set{\lam_2,\lam_1+3\lam_2}$. 
The resulting condition for
$i(\Xnh)\Lam(\zeta_{\nu})=0$ is then
\begin{multline*}
 c^{\lam_2}_{\lam_1+\lam_2,\nu}
  ((\lam_1+\lam_2)(w_0)^2 - (\lam_2)(w_0)^2)
  g_{\lam_2}g_{\lam_1+\lam_2}
 +
  c^{\lam_2}_{\lam_1+3\lam_2,\nu}
  ((\lam_1+3\lam_2)(w_0)^2 - (\lam_2)(w_0)^2)
  g_{\lam_2}g_{\lam_1+3\lam_2}
 = 
 0.
\end{multline*}
Since $\lam_1(w_0)=0$ this is satisfied if $g_{\lam_1+3\lam_2}=0$.
We find that $i(\Xnh)\Lam_{(s,u)}$ vanishes when $(s,u)$ belongs
to the right invariant distribution
\begin{equation}\label{e:Dnew}
 \D_{\textup{new}}
 :=
 \ker(\eta^{\lam_1},\eta^{\lam_1+2\lam_2},\eta^{\lam_1+3\lam_2},\eta^{2\lam_1+3\lam_2})
 =
 \textup{span}\set{\zeta_{\lam_2},\zeta_{\lam_1+\lam_2}}.
\end{equation}
This is a rank two distribution with growth $2-3-5-6$
on a six dimensional configuration
space. 
Notice that
$[\zeta_{\lam_1},\D_{\textup{new}}]\subset\D_{\textup{new}}$, i.e.,
$\D_{\textup{new}}$ is invariant under the action of the connected
Lie group $H$ on $K$. Via the Langlands decomposition 
$H$ coincides with
$P_{w_0}\cap K\cong H \cong S^1$. 
Along $\D_{\textup{new}}$ the equations of motion are thus given by
the canonical equation
\[
 i(\Xnh)\Om^K = d\Hamc.
\]
Moreover, it is easy to see that $\Xnh$ is tangent to
$\D_{\textup{new}}$. (One could say that the constraint forces
vanish. However, this does of course not mean that the motion is
Hamiltonian since $\Xnh$ does not come from a Hamiltonian system on $TK$.) 
By invariance $\D_{\textup{new}}$ factors to a rank two distribution
$\D_{\textup{new}}/H$ of growth $2-3-5$ on
$K/H \cong \textup{SU}(2)\times\SO(3)/S^1 \cong S^3\times S^2$. 
Indeed, passing to the right
trivialization of $TK$ for a moment, $\D_{\textup{new}}/H$ can be
realized as 
\[
 K\times_{H}\textup{span}\set{Z_{\lam_2},Z_{\lam_1+\lam_2}}.
\]
Further, the restriction of the compressed Hamiltonian 
\[
 \Hamc|\D_{\textup{new}}
 =
 \by{1}{2}\vv<\mathbb{I}u,u>
 +
 \by{1}{2}\lam_2(w_0)^2(g_{\lam_2}^2 + g_{\lam_1+\lam_2}^2)
\]
is $K$-independent. 
E.g., 
$\zeta_{\lam_1}(g_{\lam_2}^2 + g_{\lam_1+\lam_2}^2)
 =
  -2c_{\lam_1,\lam_2}^{\lam_1+\lam_2}
   (g_{\lam_2}g_{\lam_1+\lam_2} - g_{\lam_1+\lam_2}g_{\lam_2}) 
 = 0$.
That is, $\Hamc|\D_{\textup{new}}$ is actually left
invariant.

Let us now follow \cite{S06} and define 
$\gu_i\subset\gu$ for $i\neq0$ to be the sum of all restricted root spaces
$\gu_{\lam}$ such that $\lam_{2}$ occurs with coefficient $i$ in the
decomposition of $\lam$ into simple roots $\lam_1,\lam_2$;
$\gu_0$ is defined to be the sum of $\ao$ and all restricted root spaces
$\gu_{\lam}$ such that $\lam_{2}$ occurs with coefficient $0$ in the
decomposition of $\lam$ into simple roots $\lam_1,\lam_2$.
Thus
\[ 
 \gu
 =
 \gu_{-3}\oplus\gu_{-2}\oplus\gu_{-1}\oplus\gu_0\oplus\gu_1\oplus\gu_2\oplus\gu_3
\]
which is the grading of $\gu$ with respect to the parabolic subalgebra
$\po_{w_0} = \textup{Lie}(P_{w_0}) = \oplus_{i=0,\ldots,3}\,\gu_i$. 
Choose an orthonormal basis $X_{\lam}$ of $\oplus_{\lam\in\Sigma}\,\gu_{\lam}$
consisting of root vectors.
Then the prescription $Z_{\lam}\mapsto X_{-\lam}$ and $e_{\lam}\mapsto
X_{\lam}$ for $\lam\in\Sigma_+$ induces isomorphisms
\[
 \ho^{\bot}\cong\gu_- 
  := \gu_{-3}\oplus\gu_{-2}\oplus\gu_{-1}
 \textup{ and }
 V\cong\gu_+
  := \gu_1\oplus\gu_2\oplus\gu_3 
   = \po_{w_0}/\gu_0.
\]
This corresponds effectively to the passage from the Cartan to the
Iwasawa decomposition.
Moreover, the isomorphism $\ho^{\bot}\cong\gu_-$ is equivariant with
respect to the $H$-action on $\ho^{\bot}$ and the $P_{w_0}$-action on
$\gu_-$. This follows from the Langlands decomposition of the
parabolic $P_{w_0}$.  
Associated to the grading there
is a $P_{w_0}$-invariant filtration 
\[
 \gu/\po_{w_0}\supset\gu^{-2}/\po_{w_0}\supset\gu^{-1}/\po_{w_0}
\]
of $\gu/\po_{w_0}$ where the filter components are $\gu^{i} =
\oplus_{j=i,\ldots,3}\,\gu_j$. With this notation and the isomorphism 
$\ho^{\bot}\cong\gu_-$
we obtain
\[
 \D_{\textup{new}}/H
 \cong 
 K\times_{H}\textup{span}\set{Z_{\lam_2},Z_{\lam_1+\lam_2}}
 \cong
 G\times_{P_{w_0}}\gu^{-1}/\po_{w_0}
 \subset
 G\times_{P_{w_0}}\gu/\po_{w_0}
 \cong
 T(S^3\times S^2).
\]
The growth of the distribution is of course reflected in the way in
which the filtration reacts to the Lie bracket: 
$[\gu^{-1}/\po_{w_0},\gu^{-1}/\po_{w_0}] = \gu^{-2}/\po_{w_0}$ and
$[\gu^{-1}/\po_{w_0},\gu^{-2}/\po_{w_0}] = \gu/\po_{w_0}$.
This distribution corresponds to the homogeneous model of Cartan
geometries of type $(G,P_{w_0})$. 

Bor and Montgomery~\cite{BM06} 
have explained that 
$G\times_{P_{w_0}}\gu^{-1}/\po_{w_0}
 \subset
 G\times_{P_{w_0}}\gu/\po_{w_0}$ 
can be identified with the no-twist no-slip distribution
when one passes over the two fold covering 
$S^3\times S^2 = K/H\to\SO(3)\times S^2$ 
and when the ratio of the radii of the two balls is $1/3$.
Along similar lines 
Sagerschnig~\cite{S06} has 
explained some of the Cartan geometric background and 
proved that
it is isomorphic to a certain `divisors of $0$ distribution', and 
Agrachev~\cite{A07} 
has shown that this `divisors of $0$
distribution' can be realized as the `rubber rolling distribution'
for ratio $1/3$.

\section{Questions}




Hamiltonization at non-zero momentum $\alpha\in\ho^*$ remains
open. Generalizing Theorem~\ref{thm:ham-at-0} to 
this setting is a problem for future work.
The difficulty here is that one has to take into account the extra
structure coming from the non-zero orbit $\orb=\Ad^*(H).\alpha$ in
\eqref{e:red-space}.



Integrability?
Very little is known about integrability of $n$-D Chaplygin
systems, and we have not touched at all the question of 
integrating the systems introduced in
Section~\ref{sec:chap-based-on-lie}.  
Jovanovic~\cite{Jov09} has just shown very recently that the
$n$-D Chaplygin ball
is integrable when the inertia tensor is of special type as in
\eqref{e:jov-i}. Of course, Chaplygin~\cite{Chap87} has explicitly
integrated the $3$-D problem.


\end{document}